\patchcmd\@ynthm{%
    \cref@stack@add{#1}{\cref@label@types}%
}{%
    \edef\temp{\if@cref@capitalise\noexpand\MakeUppercase\else\noexpand\MakeLowercase\fi}%
    \expandafter\def\expandafter\temp\expandafter{\temp #3}
    \edef\temp{\@nx\crefname{#1}{\unexpanded\@xp{\temp}}{\unexpanded\@xp{\temp}s}}
    \temp
    \Crefname{#1}{#3}{#3s}%
}{}{\failed}
\def\my@cref@label#1{\@xp\@xp\@xp\my@@cref@getlabel\csname r@#1@cref\endcsname{}{}}
\def\my@@cref@getlabel#1#2{\my@@cref@getlabel@#1&[][][]&\@nil}
\def\my@@cref@getlabel@#1][#2][#3]#4&#5\@nil{#4}
\def\my@cref@type#1{\@xp\@xp\@xp\my@@cref@gettype\csname r@#1@cref\endcsname{}}
\def\my@@cref@gettype#1#2{\my@@cref@gettype@#1&[][][]!&\@nil}
\def\my@@cref@gettype@#1][#2][#3]#4&#5\@nil{\ifx!#4\else\@gobble#1\@xp\@firstofone\fi}
\let\csname MakeLowercase\space\endcsname\empty
\let\csname MakeUppercase\space\endcsname\empty
    \def\cref#1{\csname cref@\my@cref@type{#1}@format\endcsname{\my@cref@label{#1}}{}{}}
    \def\Cref#1{\csname cref@\my@cref@type{#1}@format\endcsname{\my@cref@label{#1}}{}{}}
\newtheorem{thm}{Theorem}[section]
\newtheorem{lem}[thm]{Lemma}
\newtheorem{prop}[thm]{Proposition}
\theoremstyle{definition}
\let\susp\Sigma
\let\loops\Omega
\def\suspinfty{\Sigma^{\infty}}
\def\Z{\mathbb{Z}}
\let\sm\wedge
\def\tmftw{\mathrm{tmf}_{(2)}}
\def\Ftw{\mathbb{F}_2}
\def\Fbar{\overline{F}}
\def\xbar{\overline{x}}
\def\nubar{\overline{\nu}}
\DeclareMathOperator\Ext{Ext}
\def\BGL{\mathop{\mathrm{BGL}_1}}
\def\BGLS{\BGL(S)}
\def\acksec{%
  \@startsection
    {section}{1}{\z@}{18\p@ \@plus 2\p@ \@minus 2\p@}%
    {6\p@}{\normalfont\large\bf}}
\title{Thom Complexes and the Spectrum $\tmftw$}
\author{Hood Chatham}
\begin{document}
\maketitle
\begin{abstract}
Many interesting spectra can be constructed as Thom spectra of easily constructed bundles. Mahowald~\cite{Mahowald} showed that $bu$ and $bo$ cannot be realized as $E_1$ Thom spectra. We use related techniques to show that $\tmftw$ also cannot be realized as an $E_1$ Thom spectrum.
\end{abstract}

\section{Introduction}
\begin{thm}
\label{main}
The spectrum $\tmftw$ is not a Thom spectrum of an $H$-map from a loop space to $\BGLS$.
\end{thm}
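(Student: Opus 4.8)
The plan is to suppose for contradiction that $\tmftw \simeq Mf$ for an $H$-map $f\colon \loops Y \to \BGLS$ and to derive an impossibility from mod-$2$ (co)homology. First I would set up the homological dictionary. Because $\tmftw$ is mod-$2$ orientable, $Mf$ carries a Thom class and the Thom isomorphism gives an isomorphism of $\Ftw$-algebras $H_*(\loops Y;\Ftw)\cong H_*(\tmftw;\Ftw)$; since $f$ is an $H$-map this is multiplicative, so $H_*(\loops Y;\Ftw)$ must be the polynomial algebra $\Ftw[\zeta_1^8,\zeta_2^4,\zeta_3^2,\zeta_4,\dots]$ (the dual of $H^*(\tmf;\Ftw)=\mathcal A/\!\!/\mathcal A(2)$), with algebra generators in degrees $8,12,14,15,\dots$. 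Dually, the Thom class $\iota\in H^0(\tmftw;\Ftw)$ is annihilated by $\mathcal A(2)=\langle \mathrm{Sq}^1,\mathrm{Sq}^2,\mathrm{Sq}^4\rangle$. The key structural input is the description of the $\mathcal A_*$-comodule structure: across the Thom isomorphism the coaction on $H_*(Mf)$ is the intrinsic Steenrod coaction of the space $\loops Y$ convolved with the orientation ("total Stiefel--Whitney") class $\bar w$, and for an $H$-map both factors are maps of algebras. Thus the entire comodule-algebra structure is forced by its values on the polynomial generators together with the Whitney (grouplike) condition $\mu^*\!\bar w=\bar w\otimes\bar w$.

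The heart of the argument is then to exploit the vanishing $\mathrm{Sq}^1\iota=\mathrm{Sq}^2\iota=\mathrm{Sq}^4\iota=0$. Following Mahowald's treatment of $bo$ and $bu$, I would feed the relations defining $\mathcal A(2)$ into secondary (and tertiary) cohomology operations based on $\iota$: their being defined is exactly the statement that $\mathcal A(2)$ kills $\iota$, and in $\tmftw$ their values are pinned down by the $\mathcal A(2)$-module generators in degrees $8,12,14$---for instance the coaction on the degree-$12$ class contains the term $\zeta_1^4\otimes\zeta_1^8$, recording that $\mathrm{Sq}^4$ connects the degree-$8$ and degree-$12$ generators. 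On the other side, for a Thom spectrum of a loop map these same operations on the Thom class are computed by the $w_i$ and are constrained by multiplicativity of $\bar w$ and the loop structure of $\loops Y$, which should force them to lie among decomposables. The contradiction will be that the indecomposable $\mathcal A(2)$-structure of $\tmftw$ cannot be produced by any such multiplicative twist.

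The hard part will be this last comparison. The primary-level data is genuinely consistent---because $H^*(\tmftw;\Ftw)$ is (dual to) a polynomial algebra, its divided-power coproduct happily accommodates a grouplike total Stiefel--Whitney class in low degrees, and the non-cocommutativity of $\mathcal A_*$ against the cocommutativity of $H_*(\loops Y;\Ftw)$ does not by itself obstruct the coaction---so no degree count or single Adem relation suffices. The contradiction must instead be located in the finer interaction between the $E_1$/loop structure and the Steenrod action, and the delicate point is to extract a single clean incompatibility from the secondary operations, rather than an unbounded family of conditions on the unknown, and possibly infinite-dimensional, space $Y$. Making Mahowald's bookkeeping effective for $\mathcal A(2)$ rather than $\mathcal A(1)$, and controlling the contribution of $Y$ above its bottom cell $S^9$, is where the real work lies.
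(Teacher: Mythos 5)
Your setup is correct as far as it goes: assuming $\tmftw\simeq Mf$ for an $H$-map $f$ from a loop space, the Thom isomorphism does force $H_*$ of the loop space to be the polynomial algebra on classes in degrees $8,12,14,15,31,\dots$, and this is exactly where the paper starts. But everything after that is a plan rather than a proof, and the step you defer (``extract a single clean incompatibility from the secondary operations'') is the entire content of the theorem. Worse, the route you propose for that step is not the one that works, and it is also not what Mahowald actually does for $bo$: his argument, and the paper's, contains no secondary or tertiary cohomology operations on the Thom class. You correctly observe that the primary $\mathcal{A}(2)$-module structure of $H^*(\tmftw;\Ftw)$ is perfectly consistent with being a multiplicative Thom spectrum, but you give no mechanism by which a secondary-operation analysis would fare better, and no candidate operation or relation; as written there is nothing to check.

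The missing idea is to use the loop structure \emph{geometrically} to descend to a question about finite complexes. The paper extends the inclusion $Z^{(15)}\to Z$ to $\loops\susp Z^{(15)}\to Z$, computes the fiber through a range (it is $\susp^{7}X$ for an explicit $4$-cell complex $X$ with cells in degrees $13,15,16$), and obtains a map $g\colon\susp^8X\to Y=\susp Z^{(15)}$ whose cofiber $C$ inherits a \emph{perfect} cup product $x_9x_{13}=x_{22}$ from the multiplication on $Z$ (equivalently, from the product $\xi_1^8\cdot\xi_2^4$ in $H_*(\tmftw;\Ftw)$). One then extracts a three-cell complex $D$ with $H^*(D;\Z)=\Z\{x_9,x_{13},x_{22}\}$, $Sq^4(\xbar_9)=\xbar_{13}$, and $x_9x_{13}=x_{22}$, and the contradiction is an \emph{unstable} homotopy-theoretic fact: by James's classification of three-cell complexes, a complex of type $(m,\nu_9)$ exists only if $m[\nu_9,i_9]$ lies in the image of $\nu_9\circ-\colon\pi_{20}(S^{12})\to\pi_{20}(S^9)$; that image is zero while $[\nu_9,i_9]$ has order exactly two, so $m$ must be even. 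None of this is visible from the $\mathcal{A}(2)$-comodule algebra structure alone, which is why your proposed contradiction cannot be located where you are looking for it.
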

We will prove this by contradiction. 
\begin{figure}
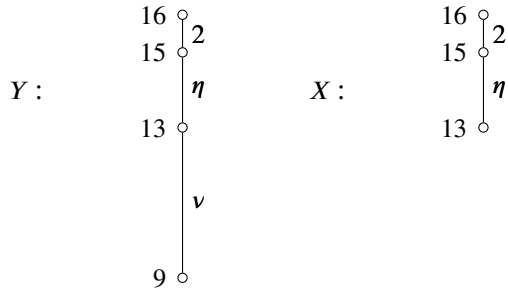

\centering
\begin{sseqpage}[no axes, yscale=0.5, class labels={left=0.2em},xscale=2, x range={-1}{3}]
\class["9"](0,8)
\class["13"](0,12)
\structline["\nu"]
\class["15"](0,14)
\structline["\eta"]
\class["16"](0,15)
\structline["2"]
\node at (-1,13) {Y\colon};
\node at (1,13) {X\colon};
\class["13"](2,12)
\class["15"](2,14)
\structline["\eta"]
\class["16"](2,15)
\structline["2"]
\end{sseqpage}
\label{fig:X-Y-cell-structures}
\caption{The cell structures of spaces $X$ and $Y$.}
\end{figure}
We show:
\begin{prop}
\label{thom-implies-perfect-cup-pairing}
Suppose that $Z$ is a loop space and $f\colon Z\to \BGLS$ is an $H$-map such that the Thom spectrum of $f$ is $\tmftw$. Then there are spaces $X$ and $Y$ with cell structures as in \cref{fig:X-Y-cell-structures} and a map $g\colon \susp^8X\to Y$ such that the cohomology of the cofiber $C$ has a cup product $x_9x_{13}=x_{22}$ where $x_9\in H^9(C;\Z)$, $x_{13}\in H^{13}(C;\Z)$, and $x_{22}\in H^{22}(C;\Z)$ are generators.
\end{prop}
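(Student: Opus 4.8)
The plan is to read the required finite-complex data off the multiplicative and $\mathcal{A}$-module structure of $\tmftw$, transported to a delooping of $Z$. First I would set up the Thom isomorphism. Since $Z=\loops W$ is a loop space and $f$ is an $H$-map, the Thom spectrum $Mf=\tmftw$ is an $E_1$-ring, and the Thom isomorphism $H_*(Z;\Ftw)\cong H_*(\tmftw;\Ftw)$ is an isomorphism of (Pontryagin) rings. I would then recall $H_*(\tmftw;\Ftw)=\Ftw[\xi_1^8,\xi_2^4,\xi_3^2,\xi_4,\dots]$, a polynomial algebra whose four lowest generators lie in degrees $8,12,14,15$; in particular $\xi_1^8\cdot\xi_2^4$ is a nonzero decomposable class in degree $20$. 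This product, together with the generator degrees, is the numerical input that will reappear (shifted) as the cells $9,13,15,16$ of $Y$ and the cup product in degree $22$.

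Next I would produce $Y$ and $X$ with their attaching maps. Writing $Z=\loops W$, the homology suspension $H_n(Z)\to H_{n+1}(W)$ carries the indecomposables in degrees $8,12,14,15$ to classes in degrees $9,13,15,16$; let $Y$ be the corresponding subcomplex of $W$ realizing exactly these cells. To pin the attaching maps I would transport the $\mathcal{A}$-module structure: since $H^*(\tmftw;\Ftw)=\mathcal{A}/\!/A(2)$ carries nonzero $\mathrm{Sq}^4,\mathrm{Sq}^2,\mathrm{Sq}^1$ between successive generators, $H^*(Y;\Ftw)$ inherits nonzero $\mathrm{Sq}^4\colon H^9\to H^{13}$, $\mathrm{Sq}^2\colon H^{13}\to H^{15}$, and $\mathrm{Sq}^1\colon H^{15}\to H^{16}$, which force the attaching maps to be $\nu,\eta,2$ (these being precisely the maps whose cofibers $S^9\cup_\nu e^{13}$, $S^{13}\cup_\eta e^{15}$, $S^{15}\cup_2 e^{16}$ carry the respective squares). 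I would then let $X$ be $Y$ with the bottom $9$-cell deleted, giving cells $13,15,16$ with the same $\eta$- and $2$-attachments.

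The crux is constructing $g$ and verifying the cup product. The idea is to realize \emph{multiplication by the bottom class}: the nonzero product $\xi_1^8\cdot\xi_2^4$, fed through the ring multiplication $m\colon\tmftw\sm\tmftw\to\tmftw$ and the loop/Thom structure, should furnish a degree-raising map $g\colon\susp^8X\to Y$ whose cofiber $C$ has cells $9,13,15,16$ and $22,24,25$, the latter being the $9$-fold shift of the cells $13,15,16$ of $X$. Because $\susp^8X$ and $Y$ occupy disjoint ranges of degrees, $H^*(C)\cong H^*(Y)\oplus H^*(\susp^9X)$ additively, and the only possible new products are cup products into the top cells. The $22$-cell is $\susp^9$ of the $13$-cell, i.e.\ the class corresponding to $\xi_1^8\cdot\xi_2^4$ after the two transgression shifts, and tracing this product through $m$ should give $x_9x_{13}=x_{22}$. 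I would carry out the homological identifications mod $2$ and lift to $H^*(-;\Z)$ in degrees $9,13,22$, where the relevant groups are free.

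The main obstacle is exactly this last step. The subtlety is that cup products in $H^*(W)=H^*(BZ)$ are \emph{not} the Pontryagin products of $H_*(Z)$ — a square can be nonzero in $H_*(\loops S^{n+1})$ while $H^*(S^{n+1})$ has no cup products at all — so one cannot simply read $x_9x_{13}$ off the loop multiplication of $Z$. The argument must genuinely invoke the $E_1$-ring multiplication of $\tmftw$ and carefully match the two suspension/transgression degree shifts, so that $\xi_1^8\cdot\xi_2^4$ in degree $20$ produces a class in degree $22=9+13$. A secondary difficulty is upgrading the statement ``detected by $\mathrm{Sq}^4,\mathrm{Sq}^2,\mathrm{Sq}^1$'' to the exact attaching maps $\nu,\eta,2$ and arranging the actual finite complexes so that their cell structures match the figure.
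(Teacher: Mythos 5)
You have the right numerology and the right raw ingredients (the Thom isomorphism making $H_*(Z;\Ftw)$ polynomial on generators in degrees $8,12,14,15,\dots$, and the Steenrod operations pinning the attaching maps of $Y$ and $X$ to $\nu,\eta,2$), but the proof has a genuine gap at its center, and you say so yourself: you never construct $g$, and you never establish the cup product. Saying that the product $\xi_1^8\cdot\xi_2^4$ ``should furnish a degree-raising map $g\colon\susp^8X\to Y$'' is not a construction --- a class in $H_{20}(Z)$ does not by itself produce a map of finite complexes. The missing idea is to use the loop-space structure of $Z$ in the following specific way: the skeletal inclusion $Z^{(15)}\to Z$ extends to an $H$-map $f\colon\loops\susp Z^{(15)}\to Z$, and one takes $F$ to be its fiber. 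Since $H_*(\loops\susp Z^{(15)};\Ftw)$ is the tensor algebra on $H_*(Z^{(15)};\Ftw)$ and $H_*(Z;\Ftw)$ is its commutative quotient, the bottom of $F$ is built from the commutators $[x_8,x_{12}],[x_8,x_{14}],[x_8,x_{15}]$ in degrees $20,22,23$; one checks (via the Steenrod action and an Adams chart showing $X$ is determined by its cohomology) that $\susp F^{(25)}\simeq\susp^8X$, and $g$ is the adjoint $\susp F^{(25)}\to\susp Z^{(15)}=Y$ of the fiber inclusion. This is exactly where the hypothesis that $Z$ is a loop space and $f$ an $H$-map enters, and nothing in your sketch plays that role.

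The obstacle you flag at the end --- that cup products in $H^*(BZ)$ are not Pontryagin products in $H_*(Z)$ --- is real but is resolved by a different and cleaner mechanism than ``invoking the $E_1$-multiplication of $\tmftw$ directly.'' Delooping the fibration gives $F'\to\susp Z^{(15)}\to BZ$ with $F=\loops F'$, and a connectivity argument shows that the cofiber $C$ of $g$ maps to $BZ$ by a cohomology isomorphism through degree $22$. Since $H_*(Z;\Ftw)$ is polynomial (this is where the ring structure of the Thom isomorphism is actually cashed in), $H^*(BZ)$ is an exterior algebra $\Lambda(\sigma y_8,\sigma y_{12},\dots)$ in the relevant range, and the product $\sigma y_8\cdot\sigma y_{12}$ of suspensions of two \emph{distinct} polynomial generators is a generator in degree $22$. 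Your own cautionary example ($\loops S^{n+1}$) involves only one generator, whose suspension squares to zero in the exterior algebra, so it is consistent with this and does not obstruct the argument. Without the fiber-of-the-James-extension construction of $g$ and the comparison of $C$ with $BZ$, the proposal does not close.
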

%
\begin{prop}
\label{extract-three-cell}
Suppose $X$ and $Y$ are spaces with cell structures as in Figure~\ref{fig:X-Y-cell-structures}, suppose $g\colon \susp^8X\to Y$ is any map and $C$ is the cofiber of $g$. Then there is a space $D$ with $H^*(D;\Z)\cong \Z\{x_{9},x_{13},x_{22}\}$ and a map $D\to C$ inducing a surjection on cohomology.
\end{prop}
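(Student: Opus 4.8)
The plan is to cut $D$ out of $C$ as its two bottom cells together with one extra cell carried isomorphically onto the top free class. I would begin by pinning down $H^*(C;\Z)$. Since $\susp^8 X$ is $20$-connected while $Y$ is $16$-dimensional, $g$ is zero on reduced cohomology; hence the cofiber sequence $\susp^8 X\xrightarrow{g}Y\to C$ splits the cohomology long exact sequence into short exact sequences and yields $\tilde H^*(C;\Z)\cong\tilde H^*(Y;\Z)\oplus\tilde H^*(\susp^9 X;\Z)$. Reading off each summand from the attaching data — the $\nu$'s and $\eta$'s contribute no integral homology, while each multiplication-by-$2$ contributes a single $\Z/2$ — shows that the free part of $H^*(C;\Z)$ sits precisely in degrees $9$, $13$, and $22$ (the only torsion being a $\Z/2$ in degrees $16$ and $25$). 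These free generators are $x_9,x_{13}$, from $Y$, and $x_{22}$, from the bottom cell of the cone on $\susp^8 X$.

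With the target identified, set $A:=C^{(13)}=S^9\cup_\nu e^{13}$, the common $13$-skeleton of $Y$ and $C$, which carries $x_9$ and $x_{13}$. The $22$-skeleton of $C$ is $C^{(22)}=Y\cup_\alpha e^{22}$, where $\alpha:=g|_{S^{21}}\in\pi_{21}(Y)$ is the restriction of $g$ to the bottom cell of $\susp^8 X$. Suppose for the moment that $\alpha$ compresses into $A$, i.e. lifts through $A\hookrightarrow Y$ to $\tilde\alpha\in\pi_{21}(A)$. Then I would put $D:=A\cup_{\tilde\alpha}e^{22}$ and take $\phi\colon D\to C$ to be the composite of $C^{(22)}\hookrightarrow C$ with the map of cofibers induced by
\[
\begin{tikzcd}
S^{21} \arrow[r, "\tilde\alpha"] \arrow[d, equal] & A \arrow[r] \arrow[d] & D \arrow[d] \\
S^{21} \arrow[r, "\alpha"] & Y \arrow[r] & C^{(22)}.
\end{tikzcd}
\]
By construction $D$ has cells only in dimensions $9,13,22$, so $H^*(D;\Z)\cong\Z\{x_9,x_{13},x_{22}\}$; and $\phi^*$ is an isomorphism in each of these degrees — on $H^9$ and $H^{13}$ because it is induced by $A\hookrightarrow Y$, and on $H^{22}$ because $\phi$ identifies the two $22$-cells — so $\phi^*$ is surjective (the torsion classes of $C$ necessarily map to $0$).

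Everything thus comes down to the compression of $\alpha$, which I expect to be the main obstacle. Working stably (we are well within the stable range for $\pi_{21}$), the compression obstruction is the image of $\alpha$ under $\pi_{21}(Y)\xrightarrow{\pi_*}\pi_{21}(Y/A)$, and $Y/A\cong S^{15}\cup_2 e^{16}\cong\susp^{15}(\mathbb{S}/2)$, so I must show $\pi_*(\alpha)=0$ in $\pi_{21}(Y/A)\cong\pi_6(\mathbb{S}/2)\cong\Z/2$. The subtlety is that $\pi_*$ is in fact onto — one checks the connecting map $\pi_{21}(Y/A)\to\pi_{20}(A)$ vanishes, its value being absorbed into a Toda-bracket indeterminacy — and its nonzero value is realized by $\iota\nu^2$, the bottom-cell image of $\nu^2\in\pi_6^s$; so compression is \emph{not} automatic for a general element of $\pi_{21}(Y)$. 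What rescues the argument is that $\alpha$ is the restriction of a map defined on all of $\susp^8 X=S^{21}\cup_\eta e^{23}\cup_2 e^{24}$; I would therefore show that no nonzero element of $\pi_{21}(Y/A)$ extends over $\susp^8 X$, forcing the extendable class $\pi_*(\alpha)$ to vanish.

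Concretely, I would analyze the extension of $\iota\nu^2$ over the two top cells of $\susp^8 X$. The primary obstruction, to extending over the $\eta$-cell $e^{23}$, is $\iota\nu^2\cdot\eta$, which vanishes because $\nu^2\eta=0$ (indeed $\nu\eta=0$ already). The secondary obstruction, to extending over the $2$-cell $e^{24}$, is a Toda bracket of shape $\langle\iota\nu^2,\eta,2\rangle\subseteq\pi_{23}(Y/A)\cong\pi_8(\mathbb{S}/2)$, and the heart of the matter is to prove this bracket is nonzero modulo its indeterminacy (which contains $2\,\pi_{23}(Y/A)$). This is a bookkeeping computation in the stable stems through dimension $8$ combined with the Moore-space structure of $Y/A$; granting the nonvanishing, $\iota\nu^2$ does not extend, $\alpha$ compresses, and the construction of $\phi\colon D\to C$ above completes the proof.
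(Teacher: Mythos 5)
Your overall architecture --- compress $\alpha=g|_{S^{21}}$ into $A=Y^{(13)}=C(\nu_9)$ and cone it off to obtain $D$ --- is the same as the paper's (\cref{extract-three-cell-main} constructs exactly this square, with $F^{(21)}\simeq C(\nu_9)$ playing the role of your $A$), and your identification of the compression obstruction as the image of $\alpha$ in $\pi_{21}(Y/A)\cong\Z/2\{\iota\nu^2\}$ is correct. The gap is in the final step. You propose to prove that \emph{no} nonzero element of $\pi_{21}(Y/A)$ extends over $\susp^8X$, and you correctly locate the decisive obstruction in the Toda bracket $\left<\iota\nu^2,\eta,2\right>\subseteq\pi_{23}(Y/A)$; but this bracket contains $0$, so the extension you want to rule out actually exists. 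Indeed $\left<\iota\nu^2,\eta,2\right>\supseteq\iota_*\left<\nu^2,\eta,2\right>$, and $\left<\nu^2,\eta,2\right>\supseteq\nu\left<\nu,\eta,2\right>=0$ since $\left<\nu,\eta,2\right>\subseteq\pi_5(S)=0$, with vanishing indeterminacy because $\nu^2\cdot\pi_2(S)=0$ and $2\cdot\pi_8=0$ --- this is precisely the bracket computation the paper performs in \cref{pi8-injective}. Concretely, $\nu^2\colon S^{21}\to S^{15}$ itself extends over $\susp^8X$, and composing such an extension with the bottom-cell inclusion $S^{15}\to Y/A$ produces a map $\susp^8X\to Y/A$ whose restriction to the bottom cell is $\iota\nu^2\neq 0$.

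What you have discarded, and cannot discard, is that the composite $\susp^8X\to Y/A$ in question factors through $Y$. The paper's argument runs entirely through this: an Adams spectral sequence analysis of $[\susp^{\infty+8}X,\suspinfty Y]\to[\susp^{\infty+8}X,\suspinfty X]$ (\cref{pi8-injective,pi8-extensions-are-products}) shows that any composite $\susp^8X\to Y\to X$ is a smash product $\beta\sm id_X$ with $\beta\in\pi_8(S)$, and for such a map the restriction to the bottom cell followed by $X\to X_{(15)}=Y/A$ is visibly null. Some input of this kind about $[\susp^8X,Y]$, rather than about $[\susp^8X,Y/A]$ alone, is unavoidable: the statement you reduce to is false as stated, so the argument cannot be completed along the lines you sketch.
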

If we take the map $g$ to be the map produced in \cref{thom-implies-perfect-cup-pairing}, then the resulting space $D$ has a perfect cup pairing $x_{9}x_{13}=x_{22}$. James has a classification theorem that says what attaching maps and cup product structures are possible on 3-cell CW complexes~\cite[Theorem 1.2]{james}. Using this we show:
\begin{prop}
\label{no-perfect-cup-pairing}
Suppose $D$ is a space with $H^*(D;\Z)\cong \Z\{x_{9},x_{13},x_{22}\}$ and $Sq^4(\xbar_{9})=\xbar_{13}$ where $\xbar_i$ denotes the image of $x_i$ under the reduction map $H^*(D;\Z)\to H^*(D;\Z/2)$. Then $x_9x_{13}=2kx_{22}$ for some $k\in \Z$.
\end{prop}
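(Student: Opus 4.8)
The plan is to convert the integral statement into a mod-$2$ statement and then to extract the parity of the cup product from the attaching maps of $D$, where James's classification does the real work.

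First I would reduce to mod $2$. Since $H^*(D;\Z)$ is free abelian, there is a unique $\lambda\in\Z$ with $x_9x_{13}=\lambda x_{22}$, and because reduction $H^*(D;\Z)\to H^*(D;\Z/2)$ is a ring homomorphism carrying $x_{22}$ to a generator $\xbar_{22}$ of $H^{22}(D;\Z/2)\cong\Z/2$, we get $\xbar_9\xbar_{13}=(\lambda\bmod 2)\,\xbar_{22}$. Thus $x_9x_{13}=2kx_{22}$ for some $k$ if and only if $\xbar_9\xbar_{13}=0$, so it suffices to show the mod-$2$ cup product vanishes. It is worth noting that the sparseness of the cells makes the Cartan formula unhelpful: apart from $Sq^4\xbar_9=\xbar_{13}$, every Steenrod square of $\xbar_9$ or $\xbar_{13}$ lands in a zero group, so no primary-operation identity can detect $\xbar_9\xbar_{13}$, and the product is intrinsically an attaching-map invariant.

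Next I would fix the cell structure. Let $D_1$ be the $13$-skeleton, so $D_1=S^9\cup_\alpha e^{13}$ with $\alpha\in\pi_{12}(S^9)\cong\Z/24$, and $D=D_1\cup_\gamma e^{22}$ with $\gamma\in\pi_{21}(D_1)$. The hypothesis $Sq^4\xbar_9=\xbar_{13}$ says that $Sq^4$ is nonzero on $D_1$, which forces $\alpha$ to be detected by $Sq^4$. The $2$-primary part of $\pi_{12}(S^9)$ is the $\Z/8$ generated by $\nu$, and $\nu$ is the class so detected, so $\alpha$ is an odd multiple of $\nu$ together with some $3$-torsion; the $3$-torsion is invisible mod $2$, so for computing $\xbar_9\xbar_{13}$ we may take $\alpha=\nu$. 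The quantity $\lambda\bmod 2$ is then the value on $\gamma$ of a (generalized) Hopf-invariant homomorphism $\pi_{21}(D_1)\to\Z/2$ recording the product of the two bottom classes in $D_1\cup_\gamma e^{22}$. I would then feed this into James's classification of $3$-cell complexes \cite[Theorem~1.2]{james}, which determines exactly which pairs of attaching maps and cup-product structures occur on a complex $S^p\cup e^q\cup e^{p+q}$ and in effect computes this homomorphism in terms of the bottom attaching map. Specialized to $p=9$, $q=13$, $\alpha=\nu$, it shows that the only cup-product structures occurring have $\lambda$ even, equivalently that the mod-$2$ Hopf-invariant homomorphism is identically zero, so $\xbar_9\xbar_{13}=0$.

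The main obstacle is this last specialization: I must match our data to the hypotheses of James's theorem and correctly read off the parity of the Hopf invariant, checking in particular that the remaining indeterminacy in $\alpha$ (the choice of odd multiple of $\nu$ and the discarded $3$-torsion) cannot alter the parity of $\lambda$, and that the relevant homomorphism is genuinely the cup-product pairing and not a shifted variant. A more self-contained alternative would be to analyze $\pi_{21}(S^9\cup_\nu e^{13})$ directly through the cofiber sequence $S^9\to D_1\to S^{13}$ together with the EHP sequence and Toda's computations, and to verify by hand that every attachment of the top cell yields an even cup product; but this essentially reproves the case of James's theorem that we need.
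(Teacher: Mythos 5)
Your overall strategy---reduce the question mod $2$, use the $Sq^4$ hypothesis to identify the attaching map of the $13$-cell as (an odd multiple of) $\nu_9$, and then invoke James's classification of three-cell complexes---is exactly the route the paper takes. The gap is that the step you defer to James's theorem is where essentially all of the content lives, and you do not carry it out. James's theorem does not output the parity of the cup product; it says that a complex of type $(m,\nu_9)$ exists if and only if $m[\nu_9,i_9]$ lies in the image of $(\nu_9)_*\colon \pi_{20}(S^{12})\to\pi_{20}(S^{9})$. To conclude that this forces $m$ to be even you must establish two unstable computations: (i) $[\nu_9,i_9]$ is a \emph{nonzero} element of order two---the paper identifies it as $\nubar_9\circ\nu_{17}$, the generator of the kernel of suspension in $\pi_{20}(S^9)\cong\Z/8\oplus\Z/2$, using $[i_9,i_9]=\nubar_9+\epsilon_9+\sigma_9\circ\eta_{16}$ together with $\epsilon_9\circ\nu_{17}=\eta_{16}\circ\nu_{17}=0$; and (ii) the map $\nu_9\circ-\colon\pi_{20}(S^{12})\to\pi_{20}(S^9)$ is zero, which requires checking $\nu\circ\epsilon=0$ and $\nu\circ\nubar=0$ from Toda's relations. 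Neither is a formality, and the conclusion is genuinely sensitive to both: if $[\nu_9,i_9]$ vanished, or if $\nu_9\circ\nubar_{12}$ were equal to the Whitehead product, then a type $(1,\nu_9)$ complex would exist and the proposition would be false. The paper even points out that a type $(1,2\nu_9)$ complex \emph{does} exist (a $9$-sphere bundle over $S^{13}$ with no section), so one cannot read ``$\lambda$ is even'' off of James's statement without doing these calculations.

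Two smaller remarks. First, your description of James's theorem as computing a mod-$2$ ``Hopf-invariant homomorphism'' on $\pi_{21}(D_1)$ is a plausible gloss but is not what the theorem says; it is an existence criterion in terms of $m[\alpha,i_q]$ and $\mathrm{im}\,\alpha_*$, and your argument should be phrased against that statement. Second, your normalization of the attaching map to $\nu$ (discarding the odd unit and the $3$-torsion) is handled at the same level of care as in the paper, which works with $\alpha=\nu_9$ outright in its reformulation; since everything in the application is $2$-local this is harmless, but it is worth saying explicitly rather than listing it as an unresolved obstacle.
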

\Cref{extract-three-cell,no-perfect-cup-pairing} show that the conclusion of \cref{thom-implies-perfect-cup-pairing} is a contradiction, which proves \cref{main}.

\subsection{Comparison to Mahowald}
Our argument is closely based on Mahowald's argument in~\cite{Mahowald} that $bo$ is not a Thom spectrum.  For comparison, we reformulate Mahowald's argument in a parallel form to ours to make the similarities and the differences apparent.  
\begin{figure}
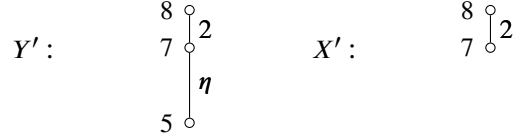

\centering
\begin{sseqpage}[no axes, yscale=0.5, class labels={left=0.2em},xscale=2, x range={-1}{3}]
\class["5"](0,5)
\class["7"](0,7)
\structline["\eta"]
\class["8"](0,8)
\structline["2"]
\node at (-1,7) {Y'\colon};
\node at (1,7) {X'\colon};
\class["7"](2,7)
\class["8"](2,8)
\structline["2"]
\end{sseqpage}
\label{fig:X'-Y'-cell-structures}
\caption{The cell structures of $X'$ and $Y'$.}
\end{figure}
\Cref{thom-implies-perfect-cup-pairing} is an analogue of:
\begin{prop}[{\cite[Discussion on page 294]{Mahowald}}]
\label{Mah-thom-implies-perfect-cup-pairing}
Suppose that $Z$ is a loop space and $f\colon Z\to \BGLS$ is an $H$-map such that the Thom spectrum of $f$ is $bo$. Then there are spaces $X'$ and $Y'$ with cell structures as indicated in \cref{fig:X'-Y'-cell-structures} and a map $\susp^4 X'\to Y'$ such that the cohomology of the cofiber $C'$ has a cup product $x_5x_{7}= x_{13}$ where $x_5\in H^5(C';\Z)$, $x_{7}\in H^{7}(C';\Z)$, and $x_{12}\in H^{12}(C';\Z)$ are generators.
\end{prop}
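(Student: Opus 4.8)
The plan is to reconstruct Mahowald's argument in the form parallel to \cref{thom-implies-perfect-cup-pairing}, recovering $X'$, $Y'$ and $g'$ from the multiplicative and loop-theoretic structure, with $A(1)$ playing the role that $A(2)$ plays for $\tmftw$. Write $Z\simeq\Omega W$ for a delooping. Since $f$ is an $H$-map, the Thom isomorphism is an isomorphism of rings, so $H_*(Z;\Ftw)\cong H_*(bo;\Ftw)=\Ftw[\zeta_1^4,\zeta_2^2,\zeta_3,\dots]$ (with $|\zeta_i|=2^i-1$) as algebras. The only features of this ring I will use are that its generators in low degrees sit in dimensions $4,6,7$ and that the product $\zeta_1^4\cdot\zeta_2^2$ is a nonzero class in dimension $10$.

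First I would build $Y'$. Because $H_*(\Omega W;\Ftw)$ is polynomial, the homology suspension identifies $\tilde H_*(W;\Ftw)$ with the indecomposables of $H_{*-1}(\Omega W;\Ftw)$ through a range; through dimension $8$ these are the classes dual to $\zeta_1^4,\zeta_2^2,\zeta_3$, so $W$ has exactly one cell in each of the dimensions $5,7,8$, and I set $Y'=W^{(8)}$. The attaching maps are read off from the $A/\!/A(1)$-module structure of $H^*(bo;\Ftw)$: there $Sq^2$ carries the degree-$4$ class to the degree-$6$ class and $Sq^1$ carries the degree-$6$ class to the degree-$7$ class (by the Adem relations $Sq^2Sq^4=Sq^6+Sq^5Sq^1$ and $Sq^1Sq^6=Sq^7$), and since the cohomology suspension commutes with Steenrod operations these become $Sq^2\colon H^5(Y')\to H^7(Y')$ and $Sq^1\colon H^7(Y')\to H^8(Y')$, i.e.\ the cells are attached by $\eta$ and by $2$ as in \cref{fig:X'-Y'-cell-structures}. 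Collapsing the bottom cell gives the two-cell complex $X'=Y'/S^5$.

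Next I would produce $g'$ and the cup product. The bottom class $\zeta_1^4$ is spherical by the Hurewicz theorem, represented by some $s\colon S^4\to Z$; restricting the loop multiplication along $s$ gives a map $S^4\times Z\to Z$ inducing $z\mapsto \zeta_1^4\cdot z$ on homology, a degree-raising operation by $4$. Restricting to the relevant skeleta and transporting across the loop--suspension adjunction to $W$ produces $g'\colon \Sigma^4 X'\to Y'$, whose cofiber $C'$ acquires new cells in dimensions $12$ and $13$ realizing the products $\zeta_1^4\zeta_2^2$ and $\zeta_1^4\zeta_3$. The relation $x_5x_7=x_{12}$ is then the assertion that $\zeta_1^4\cdot\zeta_2^2\neq 0$: under the multiplicative Thom isomorphism the Pontryagin product of the two classes in $H_*(\Omega W;\Ftw)=H_*(bo;\Ftw)$ is identified with the cup product of the corresponding cohomology generators in the two-cell extension $C'$. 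As this product is nonzero mod $2$, the integral cup product is an odd multiple of the generator $x_{12}$, which is the desired statement.

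The hard part will be the last step: making the construction of $g'$ precise and verifying that the cup product in $C'$ is genuinely the Pontryagin product $\zeta_1^4\zeta_2^2$ (in particular tracking the two degree shifts, from $\Omega W$ to $W$ and then from $W$ to the cone cells of $C'$). This is where both hypotheses are indispensable: the loop-space hypothesis supplies the delooping $W$ together with the dictionary between Pontryagin products in $\Omega W$ and cup products in complexes built from $W$, while the $H$-map hypothesis makes the Thom isomorphism multiplicative, so that the nonvanishing of $\zeta_1^4\zeta_2^2$ may be imported from the ring structure of $bo$. The remaining ingredients --- the homology computation and the reading of the attaching maps of $Y'$ off the Steenrod action --- are routine.
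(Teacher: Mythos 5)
Your overall strategy is the right one --- it is Mahowald's argument, and it is the argument the paper runs in Section~2 for $\tmftw$: use the $H$-map hypothesis to make the Thom isomorphism multiplicative so that $H_*(Z;\Ftw)$ is polynomial on classes in degrees $4,6,7,\dots$, read the cell structure of $Y'$ off the Steenrod action, and locate the cup product in a delooping of $Z$, whose cohomology is exterior precisely because $H_*(Z)$ is polynomial. But the step you yourself flag as ``the hard part'' is where your construction does not go through as stated, and it is exactly the step the paper handles differently. Your $g'$ is supposed to come from restricting the multiplication $S^4\times Z\to Z$ along skeleta and adjoining over to $W$; the natural output of that procedure (the Hopf construction) is a map $\susp(S^4\sm Z^{(7)})\to \susp Z^{(7)}=Y'$, whose source is $\susp^4 Y'$, with cells in dimensions $9$, $11$, $12$ --- not $\susp^4X'$, which has cells only in $11$ and $12$. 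Since $X'$ is a quotient of $Y'$ rather than a subcomplex, there is no map $\susp^4X'\to\susp^4Y'$ to restrict along, so you must separately argue that the $9$-cell can be discarded; nothing in your outline does this. Likewise, the assertion that the cup product in $C'$ ``is'' the Pontryagin product $\zeta_1^4\zeta_2^2$ is not literally meaningful: the homology suspension kills decomposables, so the degree-$12$ class of $W$ is not the suspension of $\zeta_1^4\zeta_2^2$, and the nonvanishing of the cup product is a statement about $\mathrm{Tor}_{H_*(Z)}(\Ftw,\Ftw)$, not about the product in $H_*(Z)$ itself.

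The paper's route (see the proof of \cref{thom-implies-perfect-cup-pairing} together with \cref{Fbar=suspX,cohomology-of-C}) repairs both points at once. Instead of multiplying by a sphere, one extends the skeletal inclusion $Z^{(7)}\to Z$ over the James construction to $\loops\susp Z^{(7)}\to Z$, takes $F$ to be its fiber, and lets $g'$ be the adjoint of the inclusion of a skeleton $\Fbar$ of $F$ into $\loops\susp Z^{(7)}$. A Serre/bar spectral sequence computation shows $H_*(\Fbar;\Ftw)$ is spanned in the relevant range by the commutators $[y_4,y_6]$ and $[y_4,y_7]$ --- there is no class $[y_4,y_4]$, which is why the source really is $\susp^4X'$ and the spurious $9$-cell never appears. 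The cofiber of $g'$ then maps to $BZ$ by a map that is a cohomology isomorphism through the range containing degree $12$, and $H^*(BZ;\Ftw)=\Lambda(\sigma y_4,\sigma y_6,\sigma y_7,\dots)$ because $H_*(Z;\Ftw)$ is polynomial; the relation $x_5x_7=x_{12}$ (up to odd multiple, hence on the nose integrally after choosing generators) is exactly the statement that $\sigma y_4\cdot\sigma y_6$ generates the exterior algebra in degree $12$. If you replace your construction of $g'$ and your ``Pontryagin product equals cup product'' step with this fiber-of-the-James-map argument, your outline becomes the paper's proof.
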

\Cref{extract-three-cell} is an analogue of:
\begin{prop}[{\cite[Lemma 3 and discussion on page 294]{Mahowald}}]
\label{Mah-extract-three-cell}
Suppose $g\colon \susp^4X'\to Y'$ is any map and $C'$ is the cofiber. Then there is a space $D'$ with $H^*(D';\Z)\cong \Z\{x_{5},x_{7},x_{12}\}$ and a map $D'\to C'$ inducing a surjection on cohomology.
\end{prop}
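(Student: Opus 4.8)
The plan is to build $D'$ one cell at a time while simultaneously constructing the map to $C'$, so that $D'$ carries exactly the three cohomology classes $x_5,x_7,x_{12}$. Write $\alpha = g|_{S^{11}}\colon S^{11}\to Y'$ for the restriction of $g$ to the bottom cell of $\susp^4 X' = S^{11}\cup_2 e^{12}$. Since $g$ extends over the top cell, which is attached by a degree $2$ map, we have $2\alpha\simeq 0$; the cells of $C'$ are those of $Y'$ in dimensions $5,7,8$ together with a $12$- and a $13$-cell, and the $12$-skeleton is $C'^{(12)} = Y'\cup_\alpha e^{12}$. Because the dimensions of $\susp^4 X'$ (namely $11,12$) are disjoint from those of $Y'$ (namely $5,7,8$), the map $g$ is trivial on mod $2$ cohomology, so $H^*(C';\Ftw)$ is $\Ftw\{x_5,x_7,x_8,x_{12},x_{13}\}$ with $Sq^2 x_5 = x_7$, $Sq^1 x_7 = x_8$, and $Sq^1 x_{12}=x_{13}$.

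First I would realize the bottom two classes. The $7$-skeleton of $Y'$ is $Y'^{(7)} = S^5\cup_\eta e^7$, and the composite $Y'^{(7)}\hookrightarrow Y'\to C'$ detects $x_5$ and $x_7$; since $\eta$ is stably trivial on homology this complex already has free integral cohomology $\Z\{x_5,x_7\}$ with $Sq^2 x_5 = x_7$. It remains to attach a single $12$-cell and extend the map so as to detect $x_{12}$, i.e.\ to map the new cell with odd degree onto the $12$-cell of $C'$. The obstruction to extending $Y'^{(7)}\to C'$ over a $12$-cell attached by some $\phi\colon S^{11}\to Y'^{(7)}$ is precisely the failure of $\alpha$ to compress into the $7$-skeleton: by the exact sequence of the pair it is measured by the image $\bar\alpha$ of $\alpha$ under $\pi_{11}(Y')\to\pi_{11}(Y'/Y'^{(7)}) = \pi_{11}(S^8)\cong\Z/24$. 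When $\bar\alpha = 0$ the map $\alpha$ lifts to $\tilde\alpha\in\pi_{11}(Y'^{(7)})$, and taking $\phi=\tilde\alpha$ gives $D' = Y'^{(7)}\cup_{\tilde\alpha}e^{12}$ together with the evident map $D'\to Y'\cup_\alpha e^{12}\hookrightarrow C'$, which is the identity on the $12$-cell and hence detects $x_{12}$.

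The main obstacle is the remaining case. Since $2\alpha\simeq 0$, the class $\bar\alpha$ is $2$-torsion in $\Z/24$, so it is either $0$ or the unique element of order two, namely $\eta^3 = 12\nu$. When $\bar\alpha=\eta^3$ the attaching map genuinely involves the degree-$2$ cell $e^8$ and does not compress into $Y'^{(7)}$, so the skeleton inclusion does not extend over a $12$-cell detecting $x_{12}$; this is exactly the delicate point. The plan here is to exploit that $e^8$ is attached to the $7$-cell by a degree $2$ map and that $2\cdot\eta^3 = 0$: these fit into a Toda bracket $\langle -,2,-\rangle$ that lets one re-attach the $12$-cell directly to $Y'^{(7)}$ after modifying the map on the bottom cells, trading the $\eta^3$ on $e^8$ for a compressible attaching map. (This is the content of Mahowald's ``discussion on page 294,'' and I expect it to be the only nonformal step.) In all cases the resulting $D'$ has cells only in dimensions $5,7,12$ with attaching maps that are stably trivial on homology, so that $H^*(D';\Z)\cong\Z\{x_5,x_7,x_{12}\}$, and by construction $D'\to C'$ hits $x_5,x_7,x_{12}$ and so is surjective on cohomology (the relevant statement being with $\Ftw$ coefficients, as these are the mod $2$ generators of $C'$).
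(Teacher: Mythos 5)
The paper does not actually prove this proposition---it is quoted from Mahowald (Lemma~3 and the discussion on page~294 of \cite{Mahowald}), and the paper explicitly warns that its own proof of the analogue, \cref{extract-three-cell}, is ``completely different'' from Mahowald's. So your attempt can only be measured against that cited argument and against the paper's proof of the analogous statement. Your formal reductions are correct: $2\alpha=0$ since the top cell of $\susp^4X'$ is attached by degree $2$; $C'^{(12)}=Y'\cup_\alpha e^{12}$; the obstruction to compressing $\alpha$ into $Y'^{(7)}$ is $\bar\alpha\in\pi_{11}(S^8)\cong\Z/24$, which is $2$-torsion and hence $0$ or $\eta^3=12\nu$; and in the compressible case $D'=Y'^{(7)}\cup_{\tilde\alpha}e^{12}$ with the evident map does everything claimed. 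Your parenthetical about coefficients is also apt: with these cell structures $H^7(C';\Z)=0=H^{12}(C';\Z)$, so the surjectivity in the statement can only sensibly be read with $\Ftw$-coefficients (or degreewise on the free part), and any proof must make that reading.

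The genuine gap is that the case $\bar\alpha=\eta^3$, which you yourself identify as the main obstacle, is the entire mathematical content of the proposition, and you leave it as an expectation rather than an argument. Note that the difficulty is real: if the map on $Y'^{(7)}$ is the skeletal inclusion, then a map $D'\to C'$ carrying the $12$-cell onto the $12$-cell of $C'$ with odd degree exists precisely when some odd multiple of $\alpha$ compresses into $Y'^{(7)}$, i.e.\ precisely when $\bar\alpha=0$ (odd multiplication fixes the $2$-torsion class $\bar\alpha$). So you must either (a) show that $\eta^3$ never occurs as $q_*\alpha$ for a $2$-torsion class $\alpha\in\pi_{11}(Y')$---a concrete metastable computation with the cofiber sequence $S^8\to\susp Y'^{(7)}$ that you have not attempted---or (b) genuinely carry out the proposed modification, but you name no specific Toda bracket, compute no indeterminacy, and do not explain why re-attaching preserves an odd-degree hit on $x_{12}$ after the map on the bottom cells is changed. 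The instructive comparison is with the paper's own analogue: there the vanishing of exactly this kind of obstruction (that the composite $S^{21}\to\susp^8X\to Y\to Y_{(15)}$ is null) is the hard step, and it consumes \cref{pi8-injective,pi8-extensions-are-products}, including Adams $\Ext$-chart computations, before the cofiber construction in \cref{extract-three-cell-main} can run. You should expect the corresponding vanishing (or repair) statement here to require a comparable nonformal input---this is precisely what Mahowald's ``discussion on page 294'' supplies and what your proposal omits.
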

Again taking $g$ to be the map produced in \cref{thom-implies-perfect-cup-pairing}, then the resulting space $D'$ has a perfect cup pairing $x_{5}x_{7}=x_{12}$. This means that $D'$ is an $S^5$ bundle over $S^7$. The 7-cell in $D'$ is attached to the 5-cell by an $\eta$ so $D'$ has no section. Mahowald deduces a contradiction:
\begin{lem}[{\cite[Lemma 4]{Mahowald}}]
\label{sphere-bundles-have-section}
Every 5-sphere bundle over $S^7$ has a section.
\end{lem}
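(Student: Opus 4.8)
The plan is to reduce the existence of a section to a single obstruction class and then to annihilate that class by computing one homotopy group of $SO(6)$. Write the bundle as $S^5\to E\xrightarrow{\pi}S^7$ and give $S^7$ its minimal CW structure, with one $0$-cell and one $7$-cell. A section can be chosen over the $0$-skeleton, and since $S^7$ has no cells in dimensions $1$ through $6$, the only obstruction to extending it over the whole sphere is the one met on the top cell, which lies in $H^7(S^7;\pi_6(S^5))\cong\pi_6(S^5)\cong\Z/2$. (Equivalently, the primary obstruction, the Euler class, would live in $H^6(S^7;\Z)=0$ and so is irrelevant.) Hence it suffices to show that this single $\Z/2$-valued obstruction always vanishes.

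Next I would identify that obstruction with the clutching data. Splitting $S^7$ into two $7$-disks glued along the equator $S^6$, the bundle is classified by a clutching class $[c]\in\pi_6(SO(6))$ (taking the structure group to be orthogonal, as James's classification of these bundles permits). Taking the constant section over each disk, the two sections differ over the equator by $\theta\mapsto c(\theta)\cdot x_0$, so the obstruction is exactly the image of $[c]$ under the map on $\pi_6$ induced by the evaluation fibration $SO(5)\to SO(6)\xrightarrow{\mathrm{ev}}SO(6)/SO(5)=S^5$. Thus the obstruction lies in the image of $\pi_6(SO(6))\to\pi_6(S^5)$, and it is enough to prove $\pi_6(SO(6))=0$.

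This last computation is clean: the exceptional isomorphism $\mathrm{Spin}(6)\cong SU(4)$, together with the fact that a covering map induces isomorphisms on $\pi_i$ for $i\ge 2$, gives $\pi_6(SO(6))\cong\pi_6(SU(4))$; since $SU(4)$ is in the stable range for $\pi_6$ (as $6\le 2\cdot 4-1$), Bott periodicity yields $\pi_6(SU(4))\cong\pi_6(SU)=0$. Therefore the clutching is null, every orthogonal $S^5$-bundle over $S^7$ is in fact trivial, and in particular has a section.

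The main obstacle is not the homotopy computation but pinning down the structure group: one must justify that the sphere bundle produced by James's theorem may be taken with orthogonal (compact Lie) structure group, so that the clutching genuinely lies in $\pi_6(SO(6))$. If one instead only knows $E$ as an $S^5$-fibration, the identical reduction identifies the obstruction with the image of $\pi_6(\mathrm{haut}(S^5))\to\pi_6(S^5)$, whose vanishing is controlled by the boundary map of the evaluation fibration $\loops^5 S^5\to\mathrm{Map}_1(S^5,S^5)\xrightarrow{\mathrm{ev}}S^5$, namely the Whitehead-product map $[\iota_5,-]\colon\pi_6(S^5)\to\pi_{10}(S^5)$. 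In that formulation the real work is to verify that this map is injective, i.e.\ that $[\iota_5,\eta_5]=[\iota_5,\iota_5]\circ\eta_9$ is nonzero in $\pi_{10}(S^5)$, which again forces the section obstruction to vanish.
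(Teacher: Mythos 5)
Your argument is correct and takes a genuinely different route from the paper's --- indeed it is much closer in spirit to Mahowald's original proof, which the paper explicitly sets aside (``Mahowald has a different proof of \cref{sphere-bundles-have-section}''). The paper's own proof is purely cohomological: it regards the total space as a three-cell complex with cells in dimensions $5,7,12$ and perfect pairing $x_5x_7=x_{12}$, observes that a section exists iff the attaching map of the $7$-cell (an element of $\pi_6(S^5)=\Z/2\{\eta_5\}$) is null, and rules out $\eta_5$ by \cref{Mah-no-perfect-cup-pairing}, which in turn rests on James's classification (\cref{jamesthm}). Your obstruction-theoretic route via $\pi_6(SO(6))\cong\pi_6(\mathrm{Spin}(6))\cong\pi_6(SU(4))=0$ is clean and proves more (the bundle is actually trivial), but for fewer objects: it requires the structure group to be orthogonal, whereas the space $D'$ to which the lemma is applied is only known to be a three-cell complex with the stated cup product --- the paper's proof literally takes that as the working definition of ``$5$-sphere bundle over $S^7$.'' You flag this issue correctly and sketch the fibration-level repair, reducing to $[\iota_5,\eta_5]\neq 0$ in $\pi_{10}(S^5)$, but you leave that nonvanishing unverified; it does hold (the suspension $E\colon\pi_{10}(S^5)\to\pi_{11}(S^6)\cong\Z\{[\iota_6,\iota_6]\}$ is necessarily zero, so by the EHP sequence $\pi_{10}(S^5)\cong\Z/2$ is generated by $P(\eta_{11})=[\iota_5,\eta_5]$), and this Whitehead-product computation is essentially the same input that James's theorem packages in the paper's version. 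In short: your primary argument is complete but proves a narrower statement than the application needs, and the fallback that would cover the general case has one asserted-but-unproved step.
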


The proof of \cref{thom-implies-perfect-cup-pairing} is exactly the same as the proof of \cref{Mah-thom-implies-perfect-cup-pairing}, we merely fill in details. The proof of \cref{extract-three-cell} is completely different from the proof of \cref{Mah-extract-three-cell}. The analog of \cref{sphere-bundles-have-section} in our setting would state that every $9$-sphere bundle over $S^{13}$ has a section, but this is false -- using
\cite[Theorem 1.2]{james}, it is possible to show that there exists a space $D$ with $H^*(D;\Z)=\Z\{x_{9},x_{13},x_{22}\}$, with $D^{(13)} \simeq C(2\nu_{9})$ and with $x_{9}x_{13}=x_{22}$. This is an $S^9$ bundle over $S^{13}$ with no section. We deduce \cref{sphere-bundles-have-section} from the following analog of \cref{no-perfect-cup-pairing}:
\begin{prop}
\label{Mah-no-perfect-cup-pairing}
Suppose $D'$ is a space with $H^*(D';\Z)\cong \Z\{x_{5},x_{7},x_{12}\}$ and $Sq^2(\xbar_{5})=\xbar_{7}$ where $\xbar_i$ denotes the image of $x_i$ under the reduction map $H^*(D';\Z)\to H^*(D';\Z/2)$. Then $x_9x_{13}=2kx_{22}$ for some $k\in \Z$.
\end{prop}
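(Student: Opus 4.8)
As printed the conclusion names the classes $x_9,x_{13},x_{22}$ of the main case; the intended statement is $x_5x_7=2kx_{12}$, which is what we prove. Write $x_5x_7=mx_{12}$. Reducing modulo $2$ gives $\xbar_5\xbar_7=\overline{m}\,\xbar_{12}$ in $H^{12}(D';\Ftw)$, so $m$ is even if and only if $\xbar_5\xbar_7=0$; thus it suffices to prove the mod-$2$ statement $\xbar_5\xbar_7=0$, where the hypothesis now reads $\xbar_7=Sq^2\xbar_5$. First I would observe that no primary Steenrod operation sees the top class: by the Adem relation $Sq^5=Sq^1Sq^4$ together with $H^{11}(D';\Ftw)=0$ we have $Sq^5\xbar_7=Sq^1Sq^4\xbar_7=0$, and $Sq^7\xbar_5=0$ because $7>5$. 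Hence the cup product $\xbar_5\xbar_7$ is a genuinely secondary invariant of the attaching map of the top cell, and it is this invariant that James's classification controls.

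To make the invariant explicit I would set $W:=(D')^{(7)}=S^5\cup_\eta e^7\simeq\susp^3\mathbb{CP}^2$ and write $D'=W\cup_\beta e^{12}$ with $\beta\in\pi_{11}(W)$. Cup products are computed by the reduced diagonal $\overline{\Delta}_{D'}\colon D'\to D'\sm D'$, and $\xbar_5\xbar_7$ is the component in $(S^5\sm S^7)\vee(S^7\sm S^5)$ of $\overline{\Delta}_{D'}$ restricted to the top cell. Since $W$ is a triple suspension, its reduced diagonal $\overline{\Delta}_W$ is null; by naturality $\overline{\Delta}_{D'}\circ\iota_W\simeq 0$, so $\overline{\Delta}_{D'}$ on the top cell is the secondary composition (Toda bracket) assembled from a nullhomotopy of $\overline{\Delta}_W$ and the attaching map $\beta$. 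This bracket is precisely the cup-product invariant tabulated in \cite[Theorem 1.2]{james}, and I would extract its value from there. This runs exactly the scheme used for \cref{no-perfect-cup-pairing}, with $(Sq^2,\susp^3\mathbb{CP}^2,(5,7,12))$ replaced by $(Sq^4,\susp^5\mathbb{HP}^2,(9,13,22))$.

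The hard part is the parity. James's formula expresses $m\bmod 2$ as a bracket pairing between the two attaching maps, and the task is to show that when the middle attaching map is the Hopf element $\eta$ itself---equivalently $\xbar_7=Sq^2\xbar_5$---this pairing is even, using that $\beta$ projects trivially to the top cell of $W$ (as $\pi_{11}(S^7)=0$) and the Hopf-invariant structure of $\eta$. This is exactly the step where the distinction between a Hopf element and its double matters: the parallel phenomenon in the main case is that the analogous pairing is even for $\nu$ but odd for $2\nu$, which is why \cref{no-perfect-cup-pairing} hypothesizes $Sq^4\xbar_9=\xbar_{13}$ and why the complex $C(2\nu_9)$ discussed above may carry a perfect pairing. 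I expect this bracket computation, together with controlling the Toda-bracket indeterminacy and matching normalizations with \cite{james}, to be the only nonformal ingredient; granting it, $\xbar_5\xbar_7=0$, so $m$ is even, and \cref{sphere-bundles-have-section} follows as indicated.
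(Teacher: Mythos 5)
You correctly diagnose the typo (the conclusion should read $x_5x_7=2kx_{12}$), correctly reduce to the mod~$2$ statement $\xbar_5\xbar_7=0$, and correctly identify \cref{jamesthm} as the relevant tool --- which is indeed how the paper handles the parallel statement \cref{no-perfect-cup-pairing}, via \cref{no-three-cell}. But the proposal stops exactly where the proof starts: the ``parity of the pairing'' that you defer (``I expect this bracket computation \dots to be the only nonformal ingredient; granting it \dots'') is the entire content of the proposition. You have also made the application of James's theorem harder than it is. As quoted in \cref{jamesthm}, the theorem is not a tabulation of a secondary cup-product invariant to be ``extracted''; it is a clean existence criterion: a complex of type $(m,\alpha)$ with $q=5$, $n=7$, $\alpha=\eta_5$ exists if and only if $m[\eta_5,i_5]$ lies in the image of $(\eta_5)_*\colon \pi_{10}(S^6)\to\pi_{10}(S^5)$. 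So the reduced-diagonal/Toda-bracket discussion, while a reasonable heuristic for why the invariant is secondary, is not the computation the theorem asks for, and no value is ever computed.

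What is actually needed --- and missing --- is the analogue of \cref{nu9-mult-zero,whitehead-product}: (i) the image of $(\eta_5)_*\colon\pi_{10}(S^6)\to\pi_{10}(S^5)$ is zero, which here is immediate because $\pi_{10}(S^6)\cong\pi_4^s=0$; and (ii) $[\eta_5,i_5]$ is a nonzero element of order $2$. For (ii) one writes $[\eta_5,i_5]=[i_5,i_5]\circ\eta_9$, notes that $[i_5,i_5]$ is the generator $\nu_5\circ\eta_8$ of $\pi_9(S^5)\cong\Z/2$ (it is nonzero since $S^5$ is not an $H$-space, and it lies in the kernel of suspension, which is everything as $\pi_{10}(S^6)=0$), and checks that $\nu_5\circ\eta_8\circ\eta_9$ generates $\pi_{10}(S^5)\cong\Z/2$. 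Given (i) and (ii), a type $(m,\eta_5)$ complex exists iff $m[\eta_5,i_5]=0$ iff $m$ is even, which is the proposition. None of these unstable computations appear in your write-up, so as it stands the argument has a genuine gap rather than being a complete proof by a different route.
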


\begin{proof}[Proof that \cref{Mah-no-perfect-cup-pairing} implies \cref{sphere-bundles-have-section}]
A 5-sphere bundle over $S^7$ is a space $D'$ with a 5-cell, a 7-cell, and a 12-cell, where in $H^*(X)$, $x_5x_7=x_{12}$. Such a space has a section if the attaching map $S^6\to S^5$ of the 7-cell is null. \Cref{Mah-no-perfect-cup-pairing} says that the attaching map cannot be $\eta_5$, so the remaining possibility is that it is null.
\end{proof}
Mahowald has a different proof of \cref{sphere-bundles-have-section}.

\subsection*{Acknowledgements}
Thanks to Sanath Devalapurkar for suggesting this project and to Robert Burklund and Andy Senger for helpful discussions. Thanks to Neil Strickland for his marvelous \href{http://neil-strickland.staff.shef.ac.uk/toda/}{unstable homotopy theory Mathematica package} which was of great help in writing section 4.

\section{Proof of \Cref{thom-implies-perfect-cup-pairing}}
For a space $X$ let $X^{(d)}$ denote the $d$-skeleton of $X$ and let $X_{(d)}$ denote the cofiber of the inclusion map $X^{(d-1)}\to X$.

\begin{proof}[Proof of \cref{thom-implies-perfect-cup-pairing}]
Suppose that $Z$ is a loop space and $Z\to \BGLS$ is an $H$-map such that the Thom spectrum is $\tmftw$. Then $H_*(Z;\Ftw)\cong \Ftw[x_{8},x_{12},x_{14},x_{15},x_{31},\ldots]$ where under the Thom isomorphism $H_*(Z)\cong H_*(\tmftw;\Ftw)=\Ftw[\xi_1^8,\xi_2^4,\xi_3^2,\xi_4,\xi_5,\ldots]$, the class $x_i$ maps to the multiplicative generator of $H_*(\tmftw;\Ftw)$ in the same degree. Because $Z$ is a loop space, the inclusion $Z^{(15)}\to Z$ of the 15 skeleton of $Z$ extends to a map $f\colon\loops\susp Z^{(15)}\to Z$. Let $F$ be the fiber of $f$ and let $\Fbar = F^{(25)}$. Let $g\colon \susp \Fbar \to \susp Z^{(15)}$ be the adjoint to inclusion of fiber map $\Fbar\to Z^{(15)}$. The Steenrod action on the homology of $\susp Z^{15}$ shows that it has the cell structure indicated for the space $Y$, so we can take $Y=\susp Z^{15}$ and $X=Y_{(13)}$. By \cref{Fbar=suspX,cohomology-of-C}, we are finished.
\end{proof}

\begin{lem}
\label{Fbar=suspX}
There is a unique space $X$ with cell structure as in \cref{fig:X-Y-cell-structures}. In the context of the proof of \cref{thom-implies-perfect-cup-pairing}, $\susp\Fbar \simeq \susp^{8}X$.
\end{lem}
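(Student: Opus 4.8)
The plan is to prove the two assertions separately: the uniqueness of $X$, and then the equivalence $\susp\Fbar\simeq\susp^8 X$, the latter by showing that $\susp\Fbar$ has the same cell structure as $\susp^8 X$ and invoking uniqueness. Write $W=Z^{(15)}$ and $V=\tilde H_*(W;\Ftw)$, which has one generator in each of the degrees $8,12,14,15$, namely $x_8,x_{12},x_{14},x_{15}$. Reading the Steenrod action off the cell structure of $Y=\susp W$ and desuspending, $\tilde H^*(W;\Ftw)$ carries $Sq^4x_8=x_{12}$, $Sq^2x_{12}=x_{14}$, $Sq^1x_{14}=x_{15}$, with $Sq^1x_8=Sq^2x_8=0$, and no other operations among these four classes. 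Since $\susp^8 X$ has cells exactly in dimensions $21,23,24$, it suffices to show that $\susp\Fbar$ has cells in dimensions $21,23,24$ with $Sq^2$ nonzero from degree $21$ to degree $23$ and $Sq^1$ nonzero from degree $23$ to degree $24$.

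For the uniqueness of $X$ I argue stably, which is legitimate because every complex in sight is highly connected with cells spanning only three dimensions. The $13$-skeleton is $S^{13}$, and the $15$-cell is attached along a class of $\pi_{14}(S^{13})\cong\Z/2\{\eta\}$, which the required $Sq^2$ forces to be $\eta$; thus $X^{(15)}\simeq S^{13}\cup_\eta e^{15}$ is determined. The $16$-cell is attached by a class $a\in\pi_{15}(X^{(15)})$, and collapsing the bottom cell along $p\colon X^{(15)}\to S^{15}$ sends $a$ to a class of $\pi_{15}(S^{15})\cong\Z$ which the required $Sq^1$ forces to be $\pm2$. The kernel of $p_*$ is the image of $\pi_{15}(S^{13})\cong\Z/2\{\eta^2\}$, but $\eta^2$ maps to $0$ in $X^{(15)}$ since $\eta$ is the attaching map of the $15$-cell; hence $p_*$ is injective, $a$ is the unique class with $p_*(a)=\pm2$, and the homotopy type of $X$ is determined. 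As this argument is stable it applies verbatim to the cell structure of $\susp^8 X$, which is therefore unique as well.

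The substance of the lemma is the identification of $\Fbar$. Because $Z$ is a loop space, $f\colon\loops\susp W\to Z$ is a loop map, so $F\to\loops\susp W\xrightarrow{f}Z$ is a fibration of $H$-spaces and the three mod-$2$ homologies are Hopf algebras. Now $H_*(\loops\susp W;\Ftw)$ is the tensor algebra $T(V)$, and in degrees below $31$---below which $H_*(Z;\Ftw)$ acquires no new polynomial generators---the map $f_*$ is the abelianization of $T(V)$ onto the polynomial algebra on $x_8,x_{12},x_{14},x_{15}$. By the Milnor--Moore theorem $T(V)=U(L)$ for $L$ the free restricted Lie algebra on $V$, the target is $U(L_{\mathrm{ab}})$, and $f_*=U(L\to L_{\mathrm{ab}})$; hence in this range $H_*(F;\Ftw)=U(L')$ with $L'=\ker(L\to L_{\mathrm{ab}})$ the subalgebra of iterated brackets of weight at least two. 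Through degree $25$ the only classes of $L'$ are the three weight-two brackets $[x_8,x_{12}]$, $[x_8,x_{14}]$, $[x_8,x_{15}]$ in degrees $20,22,23$: the self-brackets $[x_i,x_i]$ vanish; the restriction elements $x_i^{[2]}$ survive abelianization, mapping to $x_i^2\ne0$, and so are absent from $L'$; and the remaining brackets, the restrictions of bracket classes, and the nontrivial products in $U(L')$ all have degree at least $26$. Therefore $\Fbar=F^{(25)}$ has exactly one cell in each of the dimensions $20,22,23$.

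It remains to compute the Steenrod action on $\tilde H^*(\Fbar;\Ftw)$. Via the second James--Hopf map and the James splitting $\susp\loops\susp W\simeq\bigvee_n\susp W^{\sm n}$, the three classes of $\susp\Fbar$ are detected in the weight-two summand $\susp(W\sm W)$ compatibly with the Steenrod action, so I may compute $Sq^i$ on the corresponding cohomology classes by the Cartan formula inside $\tilde H^*(W;\Ftw)\otimes\tilde H^*(W;\Ftw)$. Using $Sq^2x_{12}=x_{14}$, $Sq^1x_{14}=x_{15}$ and $Sq^1x_8=Sq^2x_8=0$, the brackets obey $Sq^2[x_8,x_{12}]=[x_8,x_{14}]$ and $Sq^1[x_8,x_{14}]=[x_8,x_{15}]$. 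Thus $\Fbar$ has cells in dimensions $20,22,23$ with $Sq^2$ and $Sq^1$ nonzero, $\susp\Fbar$ has the cell structure of $\susp^8 X$, and uniqueness yields $\susp\Fbar\simeq\susp^8 X$. I expect the main obstacle to be exactly this last step: justifying rigorously that the Steenrod operations on the three-cell subquotient $\Fbar$ of $\loops\susp W$ are those predicted by the Cartan formula on the James summand, while keeping the characteristic-two restricted-Lie-algebra bookkeeping tight enough to guarantee that no further cell enters through dimension $25$.
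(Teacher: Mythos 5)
Your strategy coincides with the paper's---prove that $X$ is determined by its cohomology, then show $\susp\Fbar$ has the same three cells and the same $Sq^2$ and $Sq^1$ as $\susp^8X$---but both halves are executed differently. For uniqueness the paper passes to the stable category and reads off from the Adams $E_2$-page $\Ext(H^*X,H^*X)$ that the $(-1)$-stem is empty; your direct analysis of attaching maps, using that $\pi_{15}(S^{13}\cup_\eta e^{15})\to\pi_{15}(S^{15})$ is injective because $\eta^2$ dies in the cofiber of $\eta$, is more elementary and works just as well (2-locally, which is the ambient setting of the whole paper---note that mod-2 data only pins the top attaching map down up to odd multiples, an imprecision the paper shares). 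For the fiber, the paper's computation of the Steenrod action is exactly your Cartan-formula computation, phrased as multiplicativity of the dual Steenrod coaction on $H_*(\loops\susp Z^{(15)})$ restricted to the injectively embedded $H_*(\Fbar)$; the step you flag at the end as the likely obstacle is therefore not the problem.

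The actual gap is earlier: ``hence in this range $H_*(F;\Ftw)=U(L')$.'' Milnor--Moore identifies $f_*$ as $U(L)\to U(L_{\mathrm{ab}})$, but that is a purely algebraic statement about the map $H_*(\loops\susp Z^{(15)})\to H_*(Z)$; by itself it says nothing about the homology of the fiber of $f$, and some topological input is needed to identify $H_*(F)$ with the algebraic kernel. The paper supplies it: since the kernel of $f_*$ begins in degree $20$ with $[x_8,x_{12}]$, the map $f$ is $20$-connective, and as $Z$ is $8$-connective the Serre exact sequence of $F\to\loops\susp Z^{(15)}\to Z$ is exact through dimension $27$; surjectivity of $f_*$ in that range then yields short exact sequences $0\to H_*(F)\to H_*(\loops\susp Z^{(15)})\to H_*(Z)\to 0$, giving $H_*(\Fbar)=\Ftw\{[x_8,x_{12}],[x_8,x_{14}],[x_8,x_{15}]\}$ exactly as you assert. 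You should insert this (or an Eilenberg--Moore collapse argument) where your ``hence'' stands; with that addition your proof is complete.
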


\begin{proof}
Suppose $X_1$ and $X_2$ are two spaces with the cell structure as in \cref{fig:X-Y-cell-structures}. Since the bottom cell of $X$ is in dimension 13 and the top cell is in dimension $16<2\times 13 - 1$, there is an isomorphism $[X_1, X_2]\to [\susp^{\infty +}X_1, \susp^{\infty}X_2]$ so it suffices to show that $\susp^\infty X$ is uniquely determined by its $\Ftw$ cohomology. The $E_2$ page of the Adams spectral sequence $\Ext(H_*(X_1;\Ftw),H_*(X_2;\Ftw))$ is displayed in \cref{ASS}, and the ($-1$)-stem is empty, so $X$ is uniquely determined by its cohomology.

So to show $\susp\Fbar \simeq \susp^{8}X$ it suffices to check that that $H^*(\susp \Fbar;\Ftw)\cong H^*(\susp^{8}X;\Ftw)$. We are computing the fiber of $f\colon \loops\susp Z^{(15)}\to Z$ through dimension 23. The homology $H_{*}(\loops\susp Z^{(15)};\Ftw)$ is the free associative algebra on $H_*(Z^{(15)};\Ftw)$ and $H_{*}(f;\Ftw)$ is the map $\Ftw\langle x_{8},x_{12},x_{14},x_{15}\rangle \to \Ftw[x_8,x_{12},x_{14},x_{15},x_{31},\ldots]$ from the associative algebra to the commutative algebra with kernel generated by commutators, surjective through dimension $30$. Thus, $f$ is $20$-connective and since $Z$ is $8$-connective, and the sequence $0\to H_*(F)\to H_*(\loops\susp Z^{(15)})\to H_*(Z)\to 0$ is exact through dimension 27. Thus, $H_{*}(\Fbar)=\Ftw\{[x_8,x_{12}],[x_{8},x_{14}], [x_{8},x_{15}]\}$, where the coaction comes from the coaction on the $x_{i}$'s. We conclude that $H_{*}(\susp\Fbar;\Ftw)\cong H_{*}(\susp^8X;\Ftw)$.
\end{proof}

\begin{lem}
\label{cohomology-of-C}
In the context of the proof of \cref{thom-implies-perfect-cup-pairing}, the space $C$ has cohomology ring as follows:
$H^*(C;\Ftw) = \Ftw\{\alpha_{9}, \beta_{13},\gamma_{15},\delta_{16},\alpha_9\beta_{13},\alpha_{9}\gamma_{15},\alpha_{9}\gamma_{16}\}$. The Steenrod action is generated by $Sq^4(\alpha_{9})=\beta_{13}$, $Sq^{2}(\beta_{13})=\gamma_{15}$ and $Sq^1(\gamma_{15})=\delta_{16}$.
\end{lem}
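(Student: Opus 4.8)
The plan is to compute $H^*(C;\Ftw)$ in three stages: first the additive structure, then the Steenrod action on the low-dimensional classes, and finally the cup products, which are the crux.

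First I would record the additive structure from the cofiber sequence $\susp^8 X \xrightarrow{g} Y \to C \xrightarrow{\partial}\susp^9 X$. Since $\tilde H^*(Y;\Ftw)$ is concentrated in degrees $9,13,15,16$ and $\tilde H^*(\susp^8 X;\Ftw)$ in degrees $21,23,24$, these ranges are disjoint, so $g^*=0$ and the long exact sequence splits into short exact sequences. Hence additively $\tilde H^*(C;\Ftw)\cong \tilde H^*(Y;\Ftw)\oplus \tilde H^{*-1}(\susp^8 X;\Ftw)$, giving one class in each of the degrees $9,13,15,16,22,24,25$; call the first four $\alpha_9,\beta_{13},\gamma_{15},\delta_{16}$. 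In particular the inclusion $i\colon Y\to C$ is an isomorphism on cohomology in degrees $\le 16$.

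Next I would treat the Steenrod action on $\alpha_9,\beta_{13},\gamma_{15},\delta_{16}$. Because $i$ is an isomorphism through degree $16$, this action is inherited from $Y=\susp Z^{(15)}$, and after desuspending it is the Steenrod action on the bottom classes $a_8,a_{12},a_{14},a_{15}$ of $H^*(Z;\Ftw)$. I would compare this with $H^*(\tmftw;\Ftw)=A/\!/A(2)$ through the Thom isomorphism. The Thom isomorphism fails to be $A$-linear only through the Stiefel--Whitney classes $w_i$ of the stable spherical fibration classified by $f$; but $Z$ is $8$-connective, so $H^i(Z;\Ftw)=0$ for $0<i<8$, and every correction term $w_i\,Sq^{k-i}$ relevant to $Sq^4(a_8)$, $Sq^2(a_{12})$, $Sq^1(a_{14})$ involves some $w_i$ with $0<i\le 4$ and hence vanishes. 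It therefore suffices to compute the corresponding operations in $A/\!/A(2)$, which I would do on homology: writing $H_*(\tmftw;\Ftw)=\Ftw[\zeta_1^8,\zeta_2^4,\zeta_3^2,\zeta_4,\ldots]$ in terms of the conjugate generators $\zeta_i=\chi(\xi_i)$, the $A_*$-coaction contains the summands $\zeta_1^4\otimes\zeta_1^8$ in $\psi(\zeta_2^4)$, $\zeta_1^2\otimes\zeta_2^4$ in $\psi(\zeta_3^2)$, and $\zeta_1\otimes\zeta_3^2$ in $\psi(\zeta_4)$, which dualize to $Sq^4(a_8)=a_{12}$, $Sq^2(a_{12})=a_{14}$, $Sq^1(a_{14})=a_{15}$. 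Suspending gives $Sq^4(\alpha_9)=\beta_{13}$, $Sq^2(\beta_{13})=\gamma_{15}$, $Sq^1(\gamma_{15})=\delta_{16}$ (matching the $\nu,\eta,2$ attaching maps of \cref{fig:X-Y-cell-structures}); the remaining operations, all landing in the top classes, are then forced by the Cartan formula together with the cup products below, so the three displayed relations generate the action.

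The main obstacle is the cup products. Since $Y$ is a suspension, all cup products of $\alpha_9,\beta_{13},\gamma_{15},\delta_{16}$ vanish in $Y$, so any nonzero product in $C$ must be created by the attaching map $g$ and must land in the top cells of degree $22,24,25$. I would identify the three cells of $\susp^8X$ with Whitehead products. By \cref{Fbar=suspX} the cells of $\Fbar$ are the commutators $[x_8,x_{12}],[x_8,x_{14}],[x_8,x_{15}]$ in $H_*(\loops\susp Z^{(15)};\Ftw)$; under the fiber inclusion $\Fbar\to\loops\susp Z^{(15)}$ these realize the Samelson products of the corresponding generators, and since $g$ is the adjoint of the fiber inclusion, the Samelson products become the Whitehead products $[\iota_9,\iota_{13}]$, $[\iota_9,\iota_{15}]$, $[\iota_9,\iota_{16}]$ in $Y=\susp Z^{(15)}$ (whose sources $S^{21},S^{23},S^{24}$ are exactly the three cells of $\susp^8X$). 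Finally I would invoke the classical fact that the cofiber of the Whitehead product $[\iota_p,\iota_q]\colon S^{p+q-1}\to S^p\vee S^q$ is $S^p\times S^q$, in which $x_px_q=x_{p+q}$; applied to each pair this yields $\alpha_9\beta_{13}$, $\alpha_9\gamma_{15}$, $\alpha_9\delta_{16}$ as the generators in degrees $22,24,25$, the product $\alpha_9\beta_{13}=x_{22}$ being the one needed in \cref{thom-implies-perfect-cup-pairing}. The delicate point --- and where the details of Mahowald's argument must be supplied --- is that the cells of $Y$ are not literal wedge summands (they are linked by the $\nu$, $\eta$, and $2$ attaching maps of \cref{fig:X-Y-cell-structures}), so the identification of the relevant component of $g$ with a Whitehead product, and the transport of the cup product across the cofiber, has to be carried out after passing to the subquotients in which the Whitehead product and its Hopf-invariant data become visible.
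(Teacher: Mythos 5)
Your additive computation and your derivation of the Steenrod action in degrees $\le 16$ are fine and consistent with what is needed. But the cup products are the entire content of the lemma, and your route to them has a gap exactly at the step you yourself flag as delicate. The classes $\iota_{13}$, $\iota_{15}$, $\iota_{16}$ do not exist as elements of $\pi_*(Y)$: by \cref{fig:X-Y-cell-structures} the $13$-cell of $Y$ is attached to the $9$-cell by $\nu$, the $15$-cell to the $13$-cell by $\eta$, and the $16$-cell to the $15$-cell by $2$, so there are no sphere inclusions whose pairwise Whitehead products you could form, and the classical fact that the cofiber of $[\iota_p,\iota_q]\colon S^{p+q-1}\to S^p\vee S^q$ is $S^p\times S^q$ has nothing to apply to. What you would actually need is the generalized Whitehead product $\susp(Z^{(15)}\sm Z^{(15)})\to \susp Z^{(15)}$ of the identity with itself, an identification of $g$ on the relevant cells with the relevant components of it, and a computation of cup products in its cofiber; none of this is supplied, and it cannot be recovered by ``passing to subquotients,'' since any subquotient that isolates a single sphere in degree $9$ or $13$ kills one of the two factors of the product $\alpha_9\beta_{13}$ you are trying to detect. (Producing a three-cell subquotient carrying the product is precisely \cref{extract-three-cell}, which takes the cup product in $C$ as \emph{input}.)

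The paper's proof avoids all of this by comparing $C$ with the classifying space of $Z$. The fibration $F\to\loops\susp Z^{(15)}\to Z$ deloops to $F'\to\susp Z^{(15)}\to BZ$, and $g$ factors as $\susp\Fbar\simeq\susp\loops F'\to F'\to\susp Z^{(15)}$, so there are maps $C\to C'\to BZ$ where $C'$ is the cofiber of $F'\to\susp Z^{(15)}$. Connectivity estimates ($F'$ is $21$-connective, $BZ$ is $9$-connective, and the Freudenthal range for $\susp\loops F'\to F'$) show both maps are cohomology isomorphisms through degree $22$. Since $H_*(Z;\Ftw)$ is polynomial, $H^*(BZ;\Ftw)$ is the exterior algebra $\Lambda(\sigma y_8,\sigma y_{12},\sigma y_{14},\sigma y_{15},\ldots)$, and $\sigma y_8\cdot\sigma y_{12}$ is the degree-$22$ generator. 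This yields the nonzero products and the whole ring structure in one stroke, with no need to identify $g$ with a Whitehead product; completing your approach would amount to re-deriving the multiplicativity of $H^*(BZ;\Ftw)$ by hand.
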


\begin{proof}
The fiber sequence $F\to \loops\susp Z^{(15)}\to Z$ deloops to $F'\to \susp Z^{(15)}\to BZ$, so $F=\loops F'$.
Let $h\colon \Fbar\to \loops\susp Z^{(15)}$ be the inclusion of the fiber. The map $g\colon \susp\Fbar \to \susp Z^{(15)}$ is the composite $\susp\Fbar\to \Fbar' \to \susp Z^{(15)}$. Since the composition $\Fbar'\to \susp Z^{(15)}\to BZ$ is null there are maps in the following diagram:
\[\begin{tikzcd}
\mathllap{\susp \Fbar\simeq{}}\susp \loops \Fbar' \dar\drar & & C\dar\\
F' \rar & \susp X\urar\rar\drar& C'\dar\\
        && BZ \\
\end{tikzcd}\]
where $C'$ is the cofiber of $F'\to \susp X$. I claim that the maps $C\to C'$ and $C'\to BZ$ induce isomorphisms in cohomology in degree $\leq 22$. Since $BZ$ is $9$-connective and $F'$ is $21$-connective, by \cite[Theorem 6.1]{whitehead} the map $C'\to BZ$ induces an isomorphism in cohomology through degree $21+9-1=29$ and the map $\susp\loops F'\to F'$ induces an isomorphism in cohomology through degree $21 + 20 - 1  = 40$ so the map $C\to C'$ induces an isomorphism in cohomology through dimension $41$. It remains to compute the cohomology of $BZ$ in this range. The homology of $Z$ is polynomial, so the cohomology is a divided power algebra $H^*(Z)=\Gamma[y_{8},y_{12},y_{14},y_{15},y_{31},\ldots]$ so it follows that $H^*(BZ)=\Lambda(\sigma y_8,\sigma y_{12}, \sigma y_{14},\sigma y_{15}, \sigma y_{31},\ldots)$.
\end{proof}

\section{Extracting the three-cell complex}

\afterpage{
\begin{landscape}
\pagestyle{empty}
\newgeometry{margin=1cm,top=7.5cm}
\begin{figure}[p]%
    \sseqset{classes=fill}
    \SseqErrorToWarning{range-overflow}
    \input{ass-F-eta2-eta2}
    \input{ass-F-eta2-nueta2}
    \begin{sseqpage}[name=F-eta2-nueta2,xscale=1.7, yscale=0.7, y range={0}{10},x range={-3}{11}, title={$\Ext^{**}(H^*X,H^*Y)$}]
    \classoptions[red](8,1)
    \classoptions[red](8,2,1)
    \classoptions[red](8,2,2)
    \classoptions[red](11,2)
    \classoptions[red](11,3)
    \end{sseqpage}
    \par
    \begin{sseqpage}[name=F-eta2-eta2,xscale=1.7, yscale=0.7, y range={0}{7},x range={-3}{11}, title={$\Ext^{**}(H^*X,H^*X)$}]
    \classoptions["\eta\sigma" {left,pin}](8,2,1)
    \classoptions["\epsilon" {left,pin}](8,3)
    \classoptions[red,"c" {below,pin, black}](8,2,2)
    \classoptions[red,"\nu c"{right=0.2em, black}](11,3)
    \structlineoptions[red](8,2,2)(11,3)
    \end{sseqpage}
    \caption{The $E_2$ pages of the Adams spectral sequences computing $\pi_*F(\suspinfty X,\suspinfty Y)$ (top) and $\pi_*F(\suspinfty X,\suspinfty X)$ (bottom).}
    \label{ASS}
\end{figure}
\restoregeometry
\end{landscape}
}

In this section we prove \cref{extract-three-cell}. We show in \cref{pi8-extensions-are-products} that any composite $\susp^8X\to Y \to X$ is a smash product $\alpha\sm id_{X}$ for some $\alpha\in \pi_8(S)$. Let $i\colon S^{21}\to \susp^8X$ be the inclusion of the bottom cell. In \cref{extract-three-cell-main} we show that because the map $\susp^8X\to Y \to X$ is a smash product, any composite $S^{21}\to \susp^8X\to Y\to Y_{(15)}$ is null, We deduce that $g\circ i$ factors through the $21$ skeleton of the fiber of $Y\to Y_{(15)}$, which is $C(\nu_9)$ by \cref{fiber-of-coskeleton}. From this we deduce \cref{extract-three-cell}.

\begin{lem}
The map $\pi_8(S) \to [\susp^8X,X]$ given by $\alpha\mapsto \alpha\sm id_{X}$ is injective.
\label{pi8-injective}
\end{lem}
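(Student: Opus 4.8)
The plan is to reinterpret the map homotopically and then read it off the Adams spectral sequence in \cref{ASS}. Write $R = F(\suspinfty X,\suspinfty X)$, a ring spectrum under composition whose unit $\eta_R\colon S\to R$ is $\mathrm{id}_X$. Since $X$ lies in the stable range (as already used in \cref{Fbar=suspX}), $\pi_8 R \cong [\susp^8 X, X]$, and under this identification $(\eta_R)_*(\alpha) = \alpha\sm\mathrm{id}_X$. So the map in the lemma is exactly $(\eta_R)_*\colon\pi_8 S\to\pi_8 R$, and it suffices to prove this is injective. I would first recall that $\pi_8 S\cong \Ftw\{\eta\sigma\}\oplus\Ftw\{\epsilon\}$, where in the Adams spectral sequence for the sphere $\eta\sigma$ is detected by $h_1h_3\in\Ext^{2,10}_{\mathcal A}(\Ftw,\Ftw)$ and $\epsilon$ is detected by $c_0\in\Ext^{3,11}_{\mathcal A}(\Ftw,\Ftw)$. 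Note $\nubar = \eta\sigma+\epsilon$, so injectivity of $(\eta_R)_*$ is equivalent to showing the two basis elements $\eta\sigma$ and $\epsilon$ have linearly independent images.

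Next I would use that $\eta_R$ induces a map of Adams spectral sequences from that of $S$ to that of $R$, whose $E_2$ page is $\Ext^{**}_{\mathcal A}(H^*X, H^*X)$, displayed in the bottom chart of \cref{ASS}. On $E_2$ this map is the unit $\Ext_{\mathcal A}(\Ftw,\Ftw)\to\Ext_{\mathcal A}(H^*X, H^*X)$, and the claim is that it carries $h_1h_3$ and $c_0$ to the nonzero classes labeled $\eta\sigma$ and $\epsilon$ in the $8$-stem of that chart. Granting this, both are permanent cycles: as images of the permanent cycles $h_1h_3,c_0$ under a map of spectral sequences they support no differentials, and inspection of \cref{ASS} shows there are no classes in the positions that could hit them, so they are not boundaries either. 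Hence $E_2 = E_\infty$ in the $8$-stem and these classes detect $\eta\sigma\sm\mathrm{id}_X$ and $\epsilon\sm\mathrm{id}_X$.

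Finally, because the two detecting classes sit in Adams filtrations $2$ and $3$, the elements $\eta\sigma\sm\mathrm{id}_X$ and $\epsilon\sm\mathrm{id}_X$ are nonzero and linearly independent in $\pi_8 R$ (a nonzero class of filtration $2$ cannot coincide with one of filtration $3$), so $(\eta_R)_*$ is injective and the lemma follows. The step I expect to be the real obstacle is the claim that the unit map does not annihilate $h_1h_3$ or $c_0$ — equivalently, that the classes marked $\eta\sigma,\epsilon$ in \cref{ASS} are genuinely their images and are nonzero; this is precisely what the explicit computation of $\Ext_{\mathcal A}(H^*X, H^*X)$ underlying \cref{ASS} supplies. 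It is worth recording why a softer argument fails: restricting $\alpha\sm\mathrm{id}_X$ to the bottom cell $S^{21}\subseteq\susp^8 X$ yields $i\circ\susp^{13}\alpha$, where $i\colon S^{13}\to X$ is the bottom-cell inclusion, and since the $15$-cell of $X$ is attached to the $13$-cell by $\eta$, the element $\eta\sigma$ lies in the kernel of $i_*\colon\pi_8 S\to\pi_{21}(X)$. Thus $\eta\sigma\sm\mathrm{id}_X$ is invisible on the bottom cell, and one genuinely needs the finer filtration data of the Adams chart to see that it is nonzero.
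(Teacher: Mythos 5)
Your reduction to the Adams spectral sequence of $R=F(\suspinfty X,\suspinfty X)$ is a genuinely different route from the paper's, and most of its scaffolding is sound: the unit does induce a map of spectral sequences, images of permanent cycles are permanent cycles, nothing in \cref{ASS} can hit the $8$-stem, and the filtration argument for linear independence is fine. Your closing remark is also exactly right and worth keeping: $\eta\sigma$ dies under the bottom-cell inclusion $i_*\colon\pi_8(S)\to\pi_{21}(X)$ because the $15$-cell is attached by $\eta$, so no argument that only sees the bottom cell can work. But the step you flag as "the real obstacle" is a genuine gap, not a deferral to data the paper supplies. Everything in your proof rests on the claim that the unit map $\Ext(\Ftw,\Ftw)\to\Ext^{**}(H^*X,H^*X)$ carries $h_1h_3$ and $c_0$ to the nonzero classes in positions $(8,2)$ and $(8,3)$. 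The figure does not certify this: it displays the groups and some structure lines, and the labels $\eta\sigma$ and $\epsilon$ are justified in the paper only \emph{after} the fact --- the proof of \cref{pi8-extensions-are-products} cites \cref{pi8-injective} precisely in order to identify those labeled classes as detecting the images of $\eta\sigma,\epsilon\in\pi_8(S)$. So reading the labels as input makes the argument circular, and without them you have shown only that \emph{if} the unit is nonzero on these classes then the lemma holds. To close the gap you would have to actually compute the unit map on $\Ext$ (a finite minimal-resolution calculation, certainly machine-checkable, but not performed anywhere in the paper or in your proposal).

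The paper avoids this entirely by composing not with the bottom-cell inclusion but with the projection to the \emph{top} cell: it shows $\pi_8(S)\to[\susp^{\infty+8}X,\susp^\infty X]\to[\susp^{\infty+8}X,\susp^\infty S^{16}]$ is injective. By Spanier--Whitehead duality the target is $\pi_8(A)$ for a three-cell complex $A$ with cells in dimensions $0,1,3$ (a $2$ from the $0$-cell to the $1$-cell and an $\eta$ above that), and the composite is induced by the inclusion of the bottom cell of $A$. In the Atiyah--Hirzebruch spectral sequence the differentials into the bottom-cell contribution $\pi_8(S)$ are multiplication by $2$ (zero on the all-$2$-torsion group $\pi_8(S)$) and the Toda bracket $\left<-,\eta,2\right>$ on $\ker(\eta)\subseteq\pi_6(S)=\Z/2\{\nu^2\}$, which vanishes by the shuffle $\left<\nu^2,\eta,2\right>=\left<\nu,\nu\eta,2\right>=0$ with trivial indeterminacy. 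This is elementary and self-contained where your argument outsources its key step; if you want to keep your Adams-theoretic framing, the honest fix is either to carry out the $\Ext$ computation of the unit map or to replace that step with the top-cell/Toda-bracket argument above.
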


\begin{proof}
First note that since $X$ has its bottom cell in dimension $13$ and $\susp^8X$ has its top cell in dimension $23$ which is less than or equal to $2\times 13-2$, there is an isomorphism $[\susp^8X,X] \to [\susp^{\infty+8}X,\susp^{\infty}X]$. I claim that the further composition $\pi_8(S)\to  [\susp^{\infty+8}X,\susp^{\infty}X] \to [\susp^{\infty+8}X,\susp^{\infty}S^{16}]$ is injective, where the second map is squeezing off to the top cell of $X$. Let $A=\susp^{13}D\susp^{\infty}X$, which has the following cell structure:
\newenvironment{nscenter}
 {\parskip=0pt\par\nopagebreak\centering}
 {\par\noindent\ignorespacesafterend}
\begin{center}
\begin{sseqpage}[no axes, yscale=0.5, class labels={left=0.2em}]
\makeatletter
\begin{scope}[background]
\end{scope}
\class["0"](0,0)
\class["1"](0,1)
\structline["2"]
\class["3"](0,3)
\structline[bend right=20,"\eta"]
\end{sseqpage}
\end{center}
The map $\pi_8(S)\to  [\susp^{\infty+8}X,\susp^{\infty}X] \to \pi_8(A)$ is induced by the inclusion of the bottom cell of $A$. Thus, we need to show that no Atiyah Hirzebruch differentials hit $\eta\sigma$ or $\epsilon$ on the bottom cell of $A$. The first differential is multiplication by two, and since $\pi_8(S)$ is all two-torsion, this does not hit anything. The second differential is given by the Toda bracket $\left<-,\eta,2\right> : \ker(\eta\colon \pi_{6}(S)\to \pi_7(S)) \to \pi_{8}(S)$. Since $\pi_6(S)$ consists just of $\nu^2$, it suffices to show that the Toda bracket $\left<\nu^2,\eta,2\right>=0$. By shuffling, $\left<\nu^2,\eta,2\right>=\left<\nu,\nu\eta,2\right>=0$ because $\nu\eta=0$, and the indeterminacy is the image of $\nu^2$ and $2$ in $\pi_8$ which is trivial.
\end{proof}

\begin{lem}
The image of $\pi_{8}F(X,Y) \to \pi_8F(X,X)$ is contained in the image of the map $\pi_{8}S \to \pi_8F(X,X)$ -- that is, they are maps of the form $\alpha\sm id_X$.
\label{pi8-extensions-are-products}
\end{lem}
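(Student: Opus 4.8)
The plan is to compute the relevant Adams $E_2$ pages and compare them. We want to understand the image of the restriction map $\pi_8 F(X,Y) \to \pi_8 F(X,X)$ and show it lands in the image of $\pi_8 S \to \pi_8 F(X,X)$, where the latter sends $\alpha$ to $\alpha \wedge \mathrm{id}_X$. Since $X$ has bottom cell in dimension $13$ and top cell in dimension $16 < 2\times 13 - 1$, everything is in the stable range, so I would work with the stable function spectra $F(\suspinfty X, \suspinfty Y)$ and $F(\suspinfty X, \suspinfty X)$ and their Adams spectral sequences, whose $E_2$ pages are displayed in \cref{ASS}.

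First I would identify the classes in $\pi_8 F(X,X)$ in the image of $\pi_8 S$. The map $\pi_8 S \to \pi_8 F(X,X)$, $\alpha \mapsto \alpha \wedge \mathrm{id}_X$, is injective by \cref{pi8-injective}, and $\pi_8 S = \Z/2\{\eta\sigma\} \oplus \Z/2\{\epsilon\}$, so its image is a rank-two $\Ftw$-subspace detected at Adams filtrations matching those of $\eta\sigma$ and $\epsilon$ in the sphere. In the bottom spectral sequence of \cref{ASS} these are exactly the classes labeled $\eta\sigma$ and $\epsilon$ in stem $8$. The plan is then to locate all of $\pi_8 F(X,X)$ and show that the only additional class in stem $8$ is the one labeled $c$ (with its $\nu$-multiple $\nu c$ in stem $11$), and that this class is \emph{not} in the image of the restriction from $\pi_8 F(X,Y)$. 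Comparing the two $E_2$ pages in \cref{ASS}, the restriction map $F(X,X) \to F(X,Y)$ is induced by the map $Y \to X$ collapsing off the bottom cells; dually the map on function spectra we care about goes $F(X,Y) \to F(X,X)$ induced by the inclusion $X \to Y$ of the bottom $\susp^8$-skeleton. I would trace through which classes in the $(X,Y)$ chart map to which classes in the $(X,X)$ chart.

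The key computational step is to show that the class $c$ in stem $8$ of $\Ext(H^*X, H^*X)$ is not hit by any class in $\Ext(H^*X, H^*Y)$ under the induced map, so that no element of $\pi_8 F(X,Y)$ can detect it. Equivalently, the $\Ftw$-vector space spanned by $\eta\sigma$ and $\epsilon$ already accounts for the entire image of the restriction map in stem $8$. The red-highlighted classes in \cref{ASS} — namely $c$ and $\nu c$ in the $(X,X)$ chart and the corresponding classes in the $(X,Y)$ chart — indicate precisely the discrepancy: the $(X,Y)$ side has fewer classes available to map onto $c$, or the relevant map kills them. I would verify this by a direct comparison of the charted $\Ext$ groups, reading off that every class in stem $8$ of the $(X,Y)$ chart maps into the subspace spanned by $\eta\sigma$ and $\epsilon$.

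The main obstacle I anticipate is not the algebra of the $E_2$ pages, which is laid out explicitly in \cref{ASS}, but rather confirming there are no hidden extensions or differentials that would alter the picture in total degree $8$ — in particular ruling out a differential into or out of the stem-$8$ classes that would change which permanent cycles survive, and checking that the multiplication-by-$\nu$ structure line from $c$ to $\nu c$ genuinely obstructs $c$ from lifting to $F(X,Y)$. Since the charts are finite and concentrated in a small range, I expect these concerns to be resolved by inspecting the displayed differentials and structure lines; the comparison with the injectivity statement of \cref{pi8-injective} then pins down the image exactly.
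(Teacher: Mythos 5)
Your setup matches the paper's: work stably, use \cref{pi8-injective} to identify the classes $\eta\sigma$ and $\epsilon$ as the image of $\pi_8 S$, and reduce to showing that the remaining class $c$ in bidegree $(8,2)$ of $\Ext^{**}(H^*X,H^*X)$ is not in the image of $p\colon \pi_8 F(\suspinfty X,\suspinfty Y)\to \pi_8 F(\suspinfty X,\suspinfty X)$. (The paper also first disposes of the easy case: if $c$ supports a $d_2$ hitting $8\sigma$ then $\pi_8 S\to\pi_8 F(X,X)$ is already surjective and there is nothing to prove. And a small correction: the map $p$ is postcomposition with the collapse $Y\to Y_{(13)}=X$; there is no inclusion $X\to Y$, since $X$ is a quotient of $Y$, not a subcomplex.)

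The genuine gap is in your key step: you propose to show that $c$ is not hit by any class of $\Ext^{**}(H^*X,H^*Y)$ under the induced map on $E_2$ pages, and to conclude that no element of $\pi_8 F(X,Y)$ maps to an element detected by $c$. That inference is invalid, because $p$ need only preserve-or-increase Adams filtration: an element $x\in\pi_8 F(X,Y)$ detected in filtration $1$ (and the $(X,Y)$ chart does have a class at $(8,1)$) whose image vanishes on $E_2$ can still have $p(x)\neq 0$ detected by $c$ in filtration $2$. So no comparison of $E_2$ pages alone can rule $c$ out of the image of $p$. The paper's proof circumvents this exactly with the $\nu$-multiplication you flag as an ``obstacle to check,'' but the check is not chart inspection; it is an argument in homotopy. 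Namely: $\nu c$ is nonzero in bidegree $(11,3)$ of $\Ext^{**}(H^*X,H^*X)$, while for \emph{every} $x$ in the $8$-stem of $\pi_*F(X,Y)$ the product $\nu x$ is detected in filtration at least $4$ --- because $\nu$-multiplication out of the $8$-stem vanishes in the associated graded (so the filtration jumps by at least $2$, excluding the class at $(11,2)$), and the class at $(11,3)$ generates a $\Z/256$ and hence cannot be a $\nu$-multiple since $8\nu=0$. As $p$ does not decrease filtration, $p(x)=c$ would force $\nu c=p(\nu x)$ to lie in filtration $\geq 4$, contradicting that it is detected in filtration $3$. Without this (or an equivalent) argument, your proof does not close.
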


\begin{proof}
Refer to \cref{ASS}. By \cref{pi8-injective}, the two classes labeled $\eta\sigma$ and $\epsilon$ are the images of $\eta\sigma,\epsilon \in \pi_8(S)$. Note that the 8-stem in $\Ext^{**}(H^*X,H^*X)$ is $\Z/2\{\epsilon,\eta\sigma, c\}$ so that $\pi_8F(X,X)$ is either $(\Z/2)^2$ or $(\Z/2)^3$ depending on whether or not the class $c$ supports an Adams $d_2$ hitting $8\sigma$. If it does support such a differential, the map $\pi_8S\to \pi_8F(X,X)$ is surjective and we're done. Otherwise, it suffices to show that $c$ is not in the image of the map $p\colon \pi_8(\suspinfty X,\suspinfty Y)\to \pi_8(\suspinfty X, \suspinfty X)$. I claim that for all $x$ in the $8$-stem of $\Ext^{**}(H^*X,H^*Y)$, $\nu x$ is detected in filtration at least $4$. Since $\nu c$ is nonzero and in bidegree $(11,3)$, this implies that $c$ is not in the image of $p$. To see that $\nu x$ is detected in filtration at least $4$ note that $\nu$ multiplication on the $8$-stem is zero in the associated graded, so multiplication by $\nu$ raises filtration by at least 2. This implies that $\nu x$ cannot be the class in $(11,2)$. Since the class in $(11,3)$ is $256$-torsion, it can't be divisible by $\nu$ so $\nu x$ is detected in filtration at least $4$ as needed.
\end{proof}

\afterpage{
    \begin{sseqdata}[ no ticks, scale=0.5,yscale=0.3,cohomological Serre grading, name=SSS-Y-to-Y15]
\begin{scope}[ background ]
\foreach \x in {0,15,16}{
    \node at (\x,\ymin - 6) {\x};
}

\foreach \y in {0,9,13,23}{
    \node at (\xmin-2,\y) {\y};
}
\end{scope}
\foreach \y in {0,9,13,23}{
    \class(0,\y+0)
    \class(15,\y+0)
    \class(16,\y+0)
    \structline
}
\structline["\nu"](0,9)(0,13)
\d15(0,23)
\end{sseqdata}

    \begin{figure}
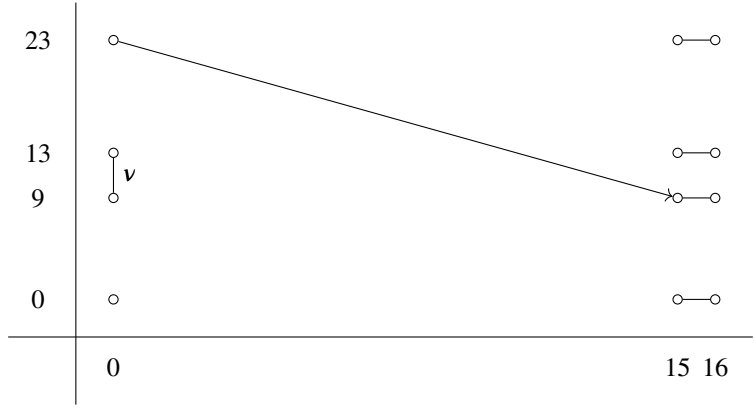

    \centering
    \printpage[name=SSS-Y-to-Y15]
    \caption{The Serre spectral sequence for $F\to Y\to Y_{(15)}$}
    \label{SSS-Y-to-Y15}
    \end{figure}
}

\begin{lem}
\label{fiber-of-coskeleton}
Let $F$ be the fiber of $Y\to Y_{(15)}$. Then $F^{(21)}\simeq C(\nu_9)$.
\end{lem}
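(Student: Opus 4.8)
The plan is to build an explicit, highly connected map $C(\nu_9)\to F$ and then read off the $21$-skeleton of $F$ from it. First I would unwind the notation: the $14$-skeleton $Y^{(14)}$ consists of the $9$-cell and the $13$-cell attached by $\nu$, so $Y^{(14)}=C(\nu_9)$, and by definition of the subscript notation $Y_{(15)}=Y/Y^{(14)}$ with $Y\to Y_{(15)}$ the collapse map. The composite $Y^{(14)}\hookrightarrow Y\to Y/Y^{(14)}$ is canonically nullhomotopic, so a choice of nullhomotopy lifts the inclusion to a map $\lambda\colon C(\nu_9)=Y^{(14)}\to F$ into the fiber. Everything then reduces to showing that $\lambda$ is $22$-connected; note that $F$ is $8$-connected (in particular simply connected), since $Y$ is $8$-connected and $Y_{(15)}$ is $14$-connected, so the Hurewicz and skeletal comparisons below all apply.

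For the connectivity estimate I would apply the Blakers--Massey theorem to the homotopy pushout square with corners $Y^{(14)}$, $Y$, $\ast$, and $Y_{(15)}$, whose comparison map to the homotopy pullback is exactly $\lambda$. The two legs out of the initial corner are the inclusion $Y^{(14)}\to Y$, which is $14$-connected because $Y/Y^{(14)}=Y_{(15)}$ is $14$-connected (relative Hurewicz), and the collapse $Y^{(14)}\to\ast$, which is $9$-connected because $C(\nu_9)$ is $8$-connected. Blakers--Massey then gives that $\lambda$ is $(14+9-1)=22$-connected.

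A $22$-connected map of simply connected spaces exhibits $F$, up to homotopy rel $Y^{(14)}$, as $Y^{(14)}$ with cells of dimension $\ge 23$ attached; since $C(\nu_9)$ is $13$-dimensional this yields $F^{(21)}\simeq (Y^{(14)})^{(21)}=Y^{(14)}=C(\nu_9)$, which is the claim. As a consistency check, and to exhibit the $\nu$ attaching map directly, I would run the $\Ftw$-coefficient Serre spectral sequence of $F\to Y\to Y_{(15)}$ in \cref{SSS-Y-to-Y15}: the base supplies the degree $15$ and $16$ classes of $H^*(Y)$, the fiber must supply the degree $9$ and $13$ classes joined by the operation $Sq^4$ reflecting the $\nu$ attaching map, and the next fiber class occurs only in degree $23$, where it supports the $d_{15}$ hitting $E^{15,9}$ that is forced by $H^{23}(Y)=H^{24}(Y)=0$. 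I expect the main obstacle to be precisely this bookkeeping: the spectral sequence by itself cannot rule out spurious fiber classes in degrees $14$ through $21$ (a pair of such classes could cancel against base classes through a $d_{15}$ and leave no trace in the abutment $H^*(Y)$), so the genuinely load-bearing input is the explicit comparison map $\lambda$ together with the sharp Blakers--Massey constant $22$ --- exactly what controls cells up through dimension $22$ and hence the $21$-skeleton.
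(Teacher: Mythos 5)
Your proof is correct, and it replaces the computational heart of the paper's argument with a more conceptual one. Both arguments begin identically: the canonical nullhomotopy of the composite $Y^{(14)}\hookrightarrow Y\to Y_{(15)}$ produces the comparison map $\lambda\colon C(\nu_9)\to F$. The paper then establishes that $\lambda$ is an equivalence through degree $22$ by running the Serre spectral sequence of $F\to Y\to Y_{(15)}$ (\cref{SSS-Y-to-Y15}) and reading off $H^*(F)$ in the relevant range; you instead recognize $\lambda$ as the pushout-to-pullback comparison map of the square with corners $Y^{(14)}$, $Y$, $\ast$, $Y_{(15)}$ and invoke Blakers--Massey with the correct connectivities ($14$ for $Y^{(14)}\to Y$ since the relative cells are in dimensions $15$ and $16$, and $9$ for $Y^{(14)}\to\ast$ since $C(\nu_9)$ is $8$-connected), getting $22$-connectivity on the nose --- exactly the sharpness needed to pin down the $21$-skeleton. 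Your route buys an argument that never needs $H^*(F)$ and controls the map itself rather than just cohomology groups; the paper's route has the minor advantage of also exhibiting the first cell of $F$ above the range (the degree $23$ class supporting the $d_{15}$). One remark: your stated reason for distrusting the spectral sequence alone is not actually a problem here. A putative fiber class in $E_2^{0,q}$ with $14\le q\le 21$ could only die under $d_{15}$ or $d_{16}$, landing in $E^{15,q-14}$ or $E^{16,q-15}$; for $q\ge 15$ these groups vanish because $F$ is $8$-connected, and for $q=14$ the target $E^{15,0}$ must survive to account for $H^{15}(Y)$. So such classes would necessarily survive to $E_\infty$ and are excluded by $H^*(Y)$; the paper's inductive bookkeeping is airtight in this range. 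This does not affect the validity of your proof.
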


\begin{proof}
Since the composite $C(\nu_9)=Y^{(13)}\to Y\to Y_{(15)}$ is null, there is a natural map $C(\nu_9)\to F$. We compute the Serre spectral sequence for the cohomology of the fiber sequence $F\to Y\to Y_{(15)}$ and see that the map $C(\nu_9)\to F$ is an equivalence through degree 22. See \cref{SSS-Y-to-Y15}.
\end{proof}

\begin{lem}
\label{extract-three-cell-main}
There is a commutative square:
\[\begin{tikzcd}
S^{21}\dar \rar & C(\nu_{9}) \dar \rar & D\dar\\
\susp^8 X\rar["g"]& Y \rar               & C\\
\end{tikzcd}\]
where rows are cofiber sequences, the map $S^{21}\to \susp^8X$ is the inclusion of the bottom cell and the map $C(\nu_{9})\to Y$ is the inclusion of the fiber of $Y\to Y_{(15)}$. The map $D\to C$ is an isomorphism in cohomology in degrees $9$, $13$, and $22$.
\end{lem}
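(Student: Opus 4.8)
The plan is to build the top cofiber sequence out of the left-hand vertical square: first I produce a map $S^{21}\to C(\nu_9)$ lifting $g\circ i$ along the inclusion of the fiber from \cref{fiber-of-coskeleton}, and then I take horizontal cofibers. The heart of the argument is the claim that the composite
\[
S^{21}\xrightarrow{\,i\,}\susp^8X\xrightarrow{\,g\,}Y\longrightarrow Y_{(15)}
\]
is null. Granting this, $g\circ i$ lifts through the fiber $F$ of $Y\to Y_{(15)}$, and since $S^{21}$ is $21$-dimensional and $\pi_{21}F^{(21)}\to\pi_{21}F$ is surjective, it lifts further through the $21$-skeleton $F^{(21)}\simeq C(\nu_9)$ of \cref{fiber-of-coskeleton}. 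This lift is the desired map $S^{21}\to C(\nu_9)$; the left square then commutes by construction, and applying the cofiber functor to the two horizontal maps produces $D=\operatorname{cofib}(S^{21}\to C(\nu_9))$ together with the induced map $D\to C$.

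To prove the nullity claim I factor $Y\to Y_{(15)}$ as the composite of collapse maps $Y\to Y_{(13)}=X\to X_{(15)}=Y_{(15)}$, the first killing the bottom $S^9$ and the second killing the $13$-cell. The composite in question therefore equals
\[
S^{21}\xrightarrow{\,i\,}\susp^8X\xrightarrow{(Y\to X)\circ g}X\longrightarrow X_{(15)}.
\]
By \cref{pi8-extensions-are-products} the middle map $(Y\to X)\circ g$ has the form $\alpha\sm\mathrm{id}_X$ for some $\alpha\in\pi_8(S)$. Writing $i=\mathrm{id}_{S^8}\sm j$ where $j\colon S^{13}\to X$ is the bottom-cell inclusion, functoriality of the smash product gives $(\alpha\sm\mathrm{id}_X)\circ i=j\circ\susp^{13}\alpha$, so this map factors through $j$. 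But $X\to X_{(15)}$ is exactly the collapse $X\to X/S^{13}$, so $S^{13}\xrightarrow{j}X\to X_{(15)}$ is null, and hence so is the whole composite to $Y_{(15)}$. This is the step I expect to be the main obstacle: the conclusion is false for a general self-map of $\susp^8X$ --- one checks $X_{(15)}\simeq S^{15}\cup_2 e^{16}$ with $\pi_{21}(X_{(15)})\cong\pi_6^s(S/2)\neq 0$ --- so the vanishing genuinely relies on the smash-product form supplied by \cref{pi8-extensions-are-products}.

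Finally I verify that $D\to C$ is an isomorphism on $H^{9}$, $H^{13}$, and $H^{22}$ by comparing the long exact cohomology sequences of the two horizontal cofiber sequences, which is legitimate because the left square commutes. The inclusion $C(\nu_9)=Y^{(13)}\to Y$ is an isomorphism on cohomology in degrees $\le 13$, while the connectivities of $\susp^8X$ and $S^{21}$ force the maps $H^{9}(C)\to H^{9}(Y)$ and $H^{13}(C)\to H^{13}(Y)$, together with their analogues for $D$ and $C(\nu_9)$, to be isomorphisms; chasing the resulting commuting squares gives the claim in degrees $9$ and $13$. In degree $22$ both $H^{22}(Y)$ and $H^{22}(C(\nu_9))$ vanish, so the connecting homomorphisms identify $H^{22}(C)\cong H^{21}(\susp^8X)$ and $H^{22}(D)\cong H^{21}(S^{21})$; under these identifications the map $H^{22}(C)\to H^{22}(D)$ becomes the restriction $i^*\colon H^{21}(\susp^8X)\to H^{21}(S^{21})$ to the bottom cell, which is an isomorphism. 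These cohomology comparisons are routine and coefficient-independent, so the same argument applies integrally.
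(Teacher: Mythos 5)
Your proof is correct and follows essentially the same route as the paper: use \cref{pi8-extensions-are-products} to write the composite $\susp^8X\to Y\to X$ as $\alpha\sm\mathrm{id}_X$, deduce that $g\circ i$ dies in $Y_{(15)}$ because it factors through the bottom cell of $X$, lift through $F^{(21)}\simeq C(\nu_9)$ via \cref{fiber-of-coskeleton}, and compare the cofiber long exact sequences. Your extra remarks (the explicit factorization $Y\to X\to X_{(15)}$ of the collapse map, and the observation that $\pi_{21}(X_{(15)})\neq 0$ so the smash-product form is genuinely needed) are correct elaborations of steps the paper leaves implicit.
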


\begin{proof}
By \cref{pi8-extensions-are-products}, the composite $\susp^8 X\to Y \to X$ is a smash product $\alpha\sm id_{X}$ for some $\alpha\in \pi_8 S$.
We get a commutative square:
\[\begin{tikzcd}
S^{21} \ar[rr,"\alpha"]\dar["\susp^8i"] & &S^{13}\rar\dar["i"] & *\dar\\
\susp^8 X \rar["g"]\ar[rr,bend right=20,"\alpha\sm id_X"',yshift=-2pt] & Y \rar & X \rar & X_{(15)}
\end{tikzcd}\]
thus the composite of $g\circ \susp^8 i \colon S^{21}\to Y$ with the projection $Y\to Y_{(15)}$ is null and factors through the fiber $F$ of $Y\to Y_{(15)}=X_{(15)}$. In fact it factors through $F^{(21)}\simeq C(\nu_9)$. Thus there is a map $h$ making the following diagram commute:
\[\begin{tikzcd}
S^{21}\dar \rar["h"] & C(\nu_9)\dar \\
\susp^8 X\rar["g"] & Y \\
\end{tikzcd}\]
The map $S^{21}\to \susp^8X$ is an isomorphism in cohomology through dimension 21, and the map $C(\nu_9)\to Y$ is an isomorphism in cohomology through dimension 13 and also in dimension 22, so the map $D\to C$ is an isomorphism in cohomology in dimensions $9$, $13$ and $21$.
\end{proof}
\Cref{extract-three-cell} is an immediate consequence of \cref{extract-three-cell-main}.

\section{Unstable calculations to prove \Cref{no-perfect-cup-pairing}}
The main ingredient of \cref{no-perfect-cup-pairing} is the following theorem of James, which tells us which cup product structures on 3-cell complexes exist. Suppose that $K$ is a three cell CW complex with cells in dimension $q$, $n$, and $n+q$. For $\alpha\in \pi_{n-1}S^q$ and $m$ an integer, say that $K$ has type $(m,\alpha)$ if the attaching map of the $n$ cell to the $q$ cell is given by $\alpha$ and the integral cohomology $H^*(K;\Z) = \Z\{x_{q},y_{n},z_{n+q}\}$ has cup product $x_qy_n=mz_{n+q}$.

\begin{thm}[{\cite[Theorem 1.2]{james}}]
\label{jamesthm}
Let $\alpha\in \pi_{n-1}(S^q)$ where $n-1> q\geq 2$. Let $[\alpha,i_q]$ denote the Whitehead product of $\alpha$ and a generator $i_q\in \pi_q(S^q)$. There exists a complex $K$ of type $(m,\alpha)$ if and only if $m[\alpha,i_q]$ is contained in the image of left composition with $\alpha$: $\alpha_*\colon \pi_{n+q-2}(S^{n-1})\to \pi_{n+q-2}(S^q)$.
\end{thm}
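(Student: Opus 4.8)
The plan is to prove both implications at once, by computing --- as $g$ ranges over all attaching maps of a top cell --- exactly which cup‑product coefficients are realizable. Write $X = S^q\cup_\alpha e^n$ for the mapping cone of $\alpha$, so that every complex of type $(m,\alpha)$ has the form $K = X\cup_g e^{n+q}$ for some $g\in \pi_{n+q-1}(X)$. Since $X$ has no cell in dimension $n+q-1$, any such $g$ produces a complex with $H^*(K;\Z)\cong \Z\{x_q,y_n,z_{n+q}\}$ as graded groups, so the only content is the cup product. Thus the theorem reduces to the claim that the set of realizable $m$ equals $\{\,m : m[\alpha,i_q]\in \operatorname{im}\alpha_*\,\}$.

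First I would reinterpret the cup product as a homomorphism on homotopy. For $K = X\cup_g e^{n+q}$ the coefficient in $x_qy_n = m z_{n+q}$ is the \emph{functional cup product} $x_q\cup_g y_n\in H^{n+q-1}(S^{n+q-1})\cong \Z$, and $g\mapsto m(g)$ is a homomorphism $\pi_{n+q-1}(X)\to \Z$; homotopically, $m(g)$ is extracted from $\overline{\Delta}_X\circ g\in\pi_{n+q-1}(X\wedge X)$ via the class $x_q\otimes y_n$, where $\overline{\Delta}_X\colon X\to X\wedge X$ is the reduced diagonal. To normalize this invariant I would treat the split case $\alpha=0$ first. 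There $X = S^q\vee S^n$, and Hilton's theorem decomposes $\pi_{n+q-1}(S^q\vee S^n)$ as a direct sum of homotopy groups of spheres indexed by basic products; the basic product $[i_q,i_n]$ contributes the summand $\pi_{n+q-1}(S^{n+q-1})\cong\Z$, and $m$ is exactly the coordinate in this summand. Since $S^q\times S^n=(S^q\vee S^n)\cup_{[i_q,i_n]}e^{n+q}$ has $m=1$, every integer occurs when $\alpha=0$, consistent with $[\alpha,i_q]=0$ in that case.

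The main step is to measure the failure of this in the presence of $\alpha$. Comparing $X=C(\alpha)$ with the wedge through the cofiber sequence $S^{n-1}\xrightarrow{\alpha} S^q\xrightarrow{j} X\xrightarrow{c} S^n$, I would produce an exact sequence, valid in the relevant range,
\[
\pi_{n+q-1}(S^q)\xrightarrow{j_*}\pi_{n+q-1}(X)\xrightarrow{m}\Z\xrightarrow{\partial}\operatorname{coker}\big(\alpha_*\colon \pi_{n+q-2}(S^{n-1})\to\pi_{n+q-2}(S^q)\big),
\]
in which $\partial(1)=[\alpha,i_q]\bmod\operatorname{im}\alpha_*$. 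The content of $\partial$ is an obstruction computation: realizing coefficient $m$ forces the top cell to be attached by a map whose Whitehead part is $m[i_q,i_n]$, but $[i_q,i_n]$ does not lift along $c$ to $\pi_{n+q-1}(X)$ on the nose. The obstruction to lifting $i_n\in\pi_n(S^n)\cong\pi_n(X,S^q)$ to $\pi_n(X)$ is $\partial i_n=\alpha$, and carrying this through the Whitehead product produces $[\alpha,i_q]$ as the obstruction at the next stage (using $[\alpha,i_q]=[i_q,i_q]\circ\Sigma^{q-1}\alpha$); the modifications of the lift that leave $m$ unchanged are exactly $j_*$ of $\alpha_*(\pi_{n+q-2}(S^{n-1}))$, which is why one mods out by $\operatorname{im}\alpha_*$. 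Exactness at $\Z$ then reads: a $g$ with $m(g)=m$ exists if and only if $m[\alpha,i_q]\in\operatorname{im}\alpha_*$, which is the theorem.

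The hard part is the identification of $\partial$ with the Whitehead product and the control of its indeterminacy. The difficulty is that the hypothesis $n-1>q\ge 2$ puts us outside the stable range, so $S^q\to X\to S^n$ is not a fibre sequence and there is no honest long exact sequence --- the cup‑product coefficient is precisely the unstable defect. I would handle this with the Blakers--Massey theorem (equivalently the relevant part of the EHP/James machinery, or a direct analysis of $X\wedge X=C(\alpha)\wedge C(\alpha)$, whose two cells $e^q\wedge e^n$ and $e^n\wedge e^q$ are attached to the bottom cell $S^{2q}$ by $\Sigma^q\alpha$), which is exactly what converts the suspended attaching map $\Sigma^q\alpha$ together with the Whitehead square $[i_q,i_q]$ into the Whitehead product $[\alpha,i_q]$. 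Checking that the indeterminacy of the functional cup product matches $\operatorname{im}\alpha_*$ on the nose, and tracking signs throughout, is the remaining bookkeeping.
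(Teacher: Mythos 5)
First, a point of order: the paper does not prove this statement at all --- it is imported verbatim from James's paper with a citation --- so there is no internal argument to compare against. The relevant comparison is with the classical proof, and your sketch does follow that line. The skeleton is right: every type-$(m,\alpha)$ complex is $X\cup_g e^{n+q}$ with $X=S^q\cup_\alpha e^n$, the coefficient $m(g)$ is a homomorphism in $g$, and the question is to compute its image. The clean implementation (essentially James's) runs through the long exact sequence of the pair $(X,S^q)$ rather than your cofibre-sequence surrogate: in the range $n+q-1\le 2n-3$ guaranteed by $n-1>q$, Blakers--Massey presents $\pi_{n+q-1}(X,S^q)$ as generated by $\iota_*\pi_{n+q-1}(D^n,S^{n-1})\cong\pi_{n+q-1}(S^n)$ together with the relative Whitehead product $[i_q,\iota]$, where $\iota$ is the characteristic map of the $n$-cell; the boundary sends $[i_q,\iota]$ to $[\alpha,i_q]$ and sends $\iota_*\pi_{n+q-1}(D^n,S^{n-1})$ onto $\operatorname{im}\alpha_*$, and $m(g)$ is the $[i_q,\iota]$-coordinate of the image of $g$ in $\pi_{n+q-1}(X,S^q)$. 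Since that image is killed by $\partial$, and conversely any class killed by $\partial$ lifts to $\pi_{n+q-1}(X)$, exactness of the pair sequence at $\pi_{n+q-1}(X,S^q)$ gives precisely the stated criterion. This is exactly the ``exactness at $\Z$'' you want, with $\partial(1)=[\alpha,i_q]\bmod\operatorname{im}\alpha_*$ as you claim.

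Two caveats. First, your displayed four-term sequence is \emph{not} exact at $\pi_{n+q-1}(X)$: already for $\alpha=0$ the Hilton summand $\pi_{n+q-1}(S^n)\subset\pi_{n+q-1}(S^q\vee S^n)$ lies in $\ker m$ but not in $\operatorname{im}j_*$. Only exactness at $\Z$ holds, and only that is needed, so the error is harmless but the claim should be excised. Second, the identification of the cup-product coefficient of $X\cup_g e^{n+q}$ with the $[i_q,\iota]$-coordinate of $g$ (equivalently, your functional-cup-product normalization) is the real content of the theorem and is asserted rather than proved; it is Steenrod's relation between cup products in a complex with a top cell and the generalized Hopf invariant of the attaching map. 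Your normalization via $S^q\times S^n$ together with the homomorphism property does determine $m$ once one knows the coordinate is detected by cohomology, but that step, and the computation of $\partial$ on the Blakers--Massey generators, is where all the work lives. So: right route, one false (unused) exactness claim, and the central identification left as a black box.
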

We apply this with $\alpha=\nu_9$ to show that no three-cell complex of type $(1,\nu_9)$ exists, which is a reformulation of \cref{no-perfect-cup-pairing}:
\begin{prop}
\label{no-three-cell}
A three cell complex $D$ of type $(m,\nu_9)$ exists if and only if $m$ is even.
\end{prop}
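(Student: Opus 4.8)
The plan is to deduce \cref{no-three-cell} directly from \cref{jamesthm}. Apply that theorem with $\alpha=\nu_9\in\pi_{12}(S^9)$, $q=9$, and $n=13$, so that $n-1=12>q=9\ge 2$ and $n+q=22$; then a complex of type $(m,\nu_9)$ exists if and only if $m[\nu_9,i_9]$ lies in the image of
\[(\nu_9)_*\colon \pi_{20}(S^{12})\to\pi_{20}(S^9),\qquad \beta\mapsto \nu_9\circ\beta.\]
Since $20\le 2\cdot 12-2$, the source is in the stable range: $\pi_{20}(S^{12})\cong\pi_8(S)\cong(\Z/2)^2$, generated by $\eta\sigma$ and $\epsilon$. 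Thus the target subgroup $\nu_9\circ\pi_8(S)\subseteq\pi_{20}(S^9)$ has at most four elements, and the whole problem becomes locating the single class $[\nu_9,i_9]$ relative to it.

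First I would simplify the Whitehead product. The standard composition identity $[i_n,\gamma]=[i_n,i_n]\circ E^{\,n-1}\gamma$ together with graded symmetry of Whitehead products gives $[\nu_9,i_9]=\pm[i_9,i_9]\circ E^{8}\nu_9=\pm[i_9,i_9]\circ\nu_{17}$, where $\nu_{17}=E^8\nu_9$ is the stable Hopf class and $[i_9,i_9]\in\pi_{17}(S^9)$ is the Whitehead square. Graded symmetry applied to $[i_9,i_9]$ itself gives $[i_9,i_9]=(-1)^9[i_9,i_9]$, hence $2[i_9,i_9]=0$ and therefore $2[\nu_9,i_9]=\bigl(2[i_9,i_9]\bigr)\circ\nu_{17}=0$. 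This already disposes of the even case: for even $m$ we have $m[\nu_9,i_9]=0$, which lies in the image of $(\nu_9)_*$, so a complex of type $(m,\nu_9)$ exists.

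It remains to treat odd $m$, and this is the computational heart of the argument. Because $[\nu_9,i_9]$ has order dividing $2$, for odd $m$ we have $m[\nu_9,i_9]=[\nu_9,i_9]$, so I must show that this one class is \emph{not} of the form $\nu_9\circ\beta$ with $\beta\in\pi_8(S)$ (in particular that it is nonzero). The main obstacle is that $\pi_{20}(S^9)$ sits just above the metastable range and both candidate classes are suspensions, so the first James--Hopf invariant cannot separate them: $[\nu_9,i_9]$ is a suspension because the Hopf invariant of a Whitehead square on an odd sphere vanishes, while $\nu_9\circ\pi_8(S)$ consists of suspensions since $\nu_9=E\nu_8$ and each $\beta\in\pi_{20}(S^{12})$ is itself a suspension. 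I would therefore compute the relevant portion of $\pi_{20}(S^9)$ explicitly from the EHP spectral sequence for $S^9$ (equivalently from Toda's tables, or with Strickland's package), tracking the three classes $[i_9,i_9]\circ\nu_{17}$, $\nu_9\circ\eta\sigma$, and $\nu_9\circ\epsilon$ through one or more stages of desuspension, and checking that the Whitehead-product class is not a $\nu_9$-multiple. As a partial sanity check, stabilization kills $[\nu_9,i_9]$ but sends $\nu_9\circ\pi_8(S)$ onto $\nu\cdot\pi_8(S)\subseteq\pi_{11}(S)$, so any element of $\nu_9\circ\pi_8(S)$ that could equal $[\nu_9,i_9]$ must lie in the kernel of stabilization; the decisive comparison is nonetheless unstable.

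Granting $[\nu_9,i_9]\notin\nu_9\circ\pi_8(S)$, James's criterion fails for odd $m$ and holds trivially for even $m$, which is precisely \cref{no-three-cell}. The equivalence with \cref{no-perfect-cup-pairing} then comes from identifying $Sq^4$ on the bottom class with the mod-$2$ reduction of the attaching map $\nu_9$ and the cup-product coefficient with $m$.
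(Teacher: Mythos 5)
Your setup is the same as the paper's: you invoke \cref{jamesthm} with $q=9$, $n=13$, $\alpha=\nu_9$, reduce the even case to $2[\nu_9,i_9]=(2[i_9,i_9])\circ\nu_{17}=0$ via graded symmetry of the Whitehead square on an odd sphere, and correctly identify that the odd case comes down to showing $[\nu_9,i_9]$ is nonzero and not of the form $\nu_9\circ\beta$ for $\beta\in\pi_{20}(S^{12})$. But at exactly that point you stop: you write that you ``would'' run the EHP/Toda computation and then proceed ``granting'' the conclusion. That granted step is the entire mathematical content of the proposition --- everything before it is formal --- so as written this is a plan with the decisive computation missing. Note in particular that your order argument only gives $2[\nu_9,i_9]=0$, which is consistent with $[\nu_9,i_9]=0$; if the Whitehead product were zero the proposition would be false, so nonvanishing genuinely has to be established, not just flagged.

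The paper fills this gap with two short Toda-table computations that you should carry out (or cite) explicitly. First, \cref{nu9-mult-zero}: $\pi_{20}(S^{12})=\Z/2\{\epsilon_{12},\nubar_{12}\}$ and $\nu_9\circ\epsilon_{12}=2\nubar_9\circ\nu_{17}=0$, $\nu_9\circ\nubar_{12}=\nu_9\circ(\eta_{12}\sigma_{13}+\epsilon_{12})=0$ since $\nu\eta=0$; so the image of $(\nu_9)_*$ is not merely small but zero, which collapses your membership question to a nonvanishing question. Second, \cref{whitehead-product}: $[i_9,i_9]=\nubar_9+\epsilon_9+\sigma_9\eta_{16}$, and after composing with $\nu_{17}$ the terms $\epsilon_9\nu_{17}$ and $\sigma_9\eta_{16}\nu_{17}$ die, leaving $[\nu_9,i_9]=\nubar_9\circ\nu_{17}$, which is the nontrivial $\Z/2$ summand of $\pi_{20}(S^9)=\Z/8\{\zeta_9\}\oplus\Z/2\{\nubar_9\nu_{17}\}$. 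With those two facts your argument closes exactly as you outlined; without them it does not yet prove anything about odd $m$.
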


\begin{lem}
\label{nu9-mult-zero}
The map $\nu_9\colon \pi_{20}(S^{12})\to \pi_{20}(S^9)$ is zero.
\end{lem}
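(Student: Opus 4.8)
The plan is to exploit the fact that the source $\pi_{20}(S^{12})$ lies in the stable range, reduce the statement to the vanishing of a single composition product, and then attack that product with a Toda bracket manipulation. Since $20-12 = 8 \le 12-2$, Freudenthal gives $\pi_{20}(S^{12}) \cong \pi_8^s \cong \Z/2\{\eta\sigma\}\oplus\Z/2\{\epsilon\}$. As $\beta\mapsto \nu_9\circ\beta$ is additive, it suffices to show it kills a generating set, and I would use the basis $\{\eta\sigma,\bar\nu\}$ (recall $\eta\sigma = \bar\nu+\epsilon$ in $\pi_8^s$), so that killing these two automatically gives $\nu_9\circ\epsilon = 0$ as well.

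The class $\eta\sigma$ is disposed of immediately. Writing its lift as $\eta_{12}\sigma_{13}\in\pi_{20}(S^{12})$, we have $\nu_9\circ(\eta_{12}\sigma_{13}) = (\nu_9\eta_{12})\sigma_{13}$, and $\nu_9\eta_{12}\in\pi_{13}(S^9)\cong\pi_4^s = 0$ since $\pi_{13}(S^9)$ is stable. This leaves the single claim $\nu_9\bar\nu_{12}=0$.

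For this last product I would present $\bar\nu_{12}\in\langle\nu_{12},\eta_{15},\nu_{16}\rangle$; the bracket is defined because $\nu_{12}\eta_{15}\in\pi_{16}(S^{12})\cong\pi_4^s = 0$ and $\eta_{15}\nu_{16}\in\pi_{19}(S^{15})\cong\pi_4^s = 0$. Sliding $\nu_9$ into the first slot via the standard inclusion $\nu_9\circ\langle\nu_{12},\eta_{15},\nu_{16}\rangle\subseteq\langle\nu_9\nu_{12},\eta_{15},\nu_{16}\rangle$, and noting that $\nu_9\nu_{12}=\nu_9^2\in\pi_{15}(S^9)\cong\pi_6^s$ is the generator $\nu^2$, reduces the problem to showing that the Toda bracket $\langle\nu^2,\eta,\nu\rangle$ (interpreted unstably in $\pi_{20}(S^9)$) contains $0$ and that the specific element $\nu_9\bar\nu_{12}$ is the zero element of the resulting coset.

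The hard part will be exactly this unstable vanishing. Stably the bracket maps to $\nu\bar\nu\in\pi_{11}^s$, which must vanish: already in the Adams $E_2$-page $h_2\cdot h_1h_3 = 0$ because $h_1h_2 = 0$, and one checks no surviving higher-filtration class remains (equivalently this is read off Toda's relations in $\pi_{11}^s$). The genuine obstacle is passing from this stable fact to the unstable statement in $\pi_{20}(S^9)$: one must rule out a nonzero contribution in the kernel of the suspension and control the indeterminacy of the unstable bracket, whose summand $\pi_{17}(S^9)\circ\nu_{16}$ is no longer stable. I would settle these residual unstable points by consulting Toda's tables for $\pi_{20}(S^9)$ together with the relevant composition products, equivalently with the aid of Strickland's unstable homotopy theory package; this is the step where the computation genuinely requires the metastable data rather than purely formal bracket identities.
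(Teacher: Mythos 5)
Your reduction to generators is fine, and your disposal of $\eta\sigma$ (via $\nu_9\circ\eta_{12}\in\pi_{13}(S^9)\cong\pi_4^s=0$) is the same as the paper's second step. The gap is in the step you yourself flag as ``the hard part,'' and it is not a residual detail that tables will mop up after the bracket manipulation: the bracket argument provably cannot decide the question. The indeterminacy of $\langle\nu_9^2,\eta_{15},\nu_{16}\rangle$ in $\pi_{20}(S^9)$ is $\nu_9^2\circ\pi_{20}(S^{15})+\pi_{17}(S^9)\circ\nu_{17}$; the first summand vanishes since $\pi_{20}(S^{15})\cong\pi_5^s=0$, but the second contains $\nubar_9\circ\nu_{17}$, which by Toda's Theorem 7.4 is the \emph{nonzero} generator of the kernel of suspension in $\pi_{20}(S^9)=\Z/8\{\zeta_9\}\oplus\Z/2\{\nubar_9\circ\nu_{17}\}$ (it is the Whitehead product $[\nu_9,i_9]$, as the next lemma in the paper shows). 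So the bracket is at least the full coset $\{0,\nubar_9\circ\nu_{17}\}$, and knowing that $\nu_9\circ\nubar_{12}$ lies in it cannot distinguish $0$ from the kernel-of-suspension class --- which is exactly the dichotomy the lemma must resolve. The stable vanishing of $\nu\nubar$ is useless for the same reason: the dangerous value is killed by suspension.

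Note also that once $\nu_9\circ\eta_{12}\sigma_{13}=0$ is in hand, $\nubar_{12}=\eta_{12}\sigma_{13}+\epsilon_{12}$ reduces your remaining task to showing $\nu_9\circ\epsilon_{12}=0$, so ``consulting Toda's tables for the relevant composition products'' at the end of your argument means looking up precisely the composite the lemma is about; the bracket shuffle has contributed nothing by then. The paper instead imports the genuinely unstable input directly as a composition relation: Toda's (7.18) gives $\nu_6\circ\epsilon_9=2\nubar_6\circ\nu_{14}$, which suspends to $\nu_9\circ\epsilon_{12}=2\nubar_9\circ\nu_{17}=0$ because $2\nubar_7=0$. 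You need this relation (or an equivalent unstable composition formula from Toda); no formal Toda-bracket identity will substitute for it.
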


\begin{proof}
According to \cite[Theorem 7.1]{toda}, $\pi_{20}S^{12}=\Z/2\{\epsilon_{12},\nubar_{12}\}$. By \cite[Equations 7.18]{toda}, $\nu_6\circ \epsilon_{9}=2\nubar_6\circ \nu_{14}$ so suspending this gives $\nu_{9}\circ \epsilon_{12}=2\nubar_9\circ \nu_{17}$. According to \cite[Theorem 7.1]{toda}, $2\nubar_7=0$ so $\nu_{9}\circ \epsilon_{12}=0$.

By \cite[Lemma 6.4]{toda}, $\eta_{12}\circ \sigma_{13}=\epsilon_{12}+\nubar_{12}$. Since $\nu_6\circ \eta_{9}=0$, we see that $\nu_9\circ \nubar_{12} = \nu_9\circ (\epsilon_{12}+\nubar_{12}) = \nu_9\circ \eta_{12}\circ \sigma_{13} = 0$.
\end{proof}

\begin{lem}
\label{whitehead-product}
The Whitehead product $[\nu_9,i_{9}]$ is of order two.
\end{lem}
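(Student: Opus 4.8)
The plan is to reduce the Whitehead product to a composite of known elements and then bound its order from above and below separately. First I would invoke the standard relationship between Whitehead products and the EHP boundary homomorphism $\Delta\colon\pi_{22}(S^{19})\to\pi_{20}(S^9)$ attached to James's fibration $S^9\to\loops S^{10}\to\loops S^{19}$, for which $\Delta(i_{19})=[i_9,i_9]$ and, more generally, $\Delta(E^2\beta)=[i_9,i_9]\circ\beta$ for $\beta\in\pi_{20}(S^{17})$ (see \cite{toda}). Taking $\beta=\nu_{17}$ and using $E^2\nu_{17}=\nu_{19}$ gives $\Delta(\nu_{19})=[i_9,i_9]\circ\nu_{17}$, and the same relation identifies $[\nu_9,i_9]=\pm\Delta(\nu_{19})$. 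Thus it suffices to prove that the composite $[i_9,i_9]\circ\nu_{17}\in\pi_{20}(S^9)$ has order exactly two.

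For the upper bound I would use graded commutativity of the Whitehead product: since $9$ is odd, $[i_9,i_9]=(-1)^{9\cdot 9}[i_9,i_9]=-[i_9,i_9]$, so $2[i_9,i_9]=0$. Because composition is additive in each variable in this range, $2\bigl([i_9,i_9]\circ\nu_{17}\bigr)=\bigl(2[i_9,i_9]\bigr)\circ\nu_{17}=0$, so $[\nu_9,i_9]$ has order dividing two. I would also record that $[i_9,i_9]$ is itself nonzero: by the Hopf invariant one theorem $[i_n,i_n]=0$ only for $n\in\{1,3,7\}$, so $[i_9,i_9]$ has order exactly two. This is the essential input, but it is not by itself enough, since the composite with $\nu_{17}$ could a priori vanish.

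The main obstacle is the lower bound, namely showing $[i_9,i_9]\circ\nu_{17}\neq 0$, equivalently $\Delta(\nu_{19})\neq 0$. By exactness of the EHP sequence $\pi_{22}(S^{10})\xrightarrow{H}\pi_{22}(S^{19})\xrightarrow{\Delta}\pi_{20}(S^9)$, this amounts to proving that $\nu_{19}$ is \emph{not} a James--Hopf invariant, i.e. $\nu_{19}\notin\operatorname{im}(H)$. I would establish this by reading off the relevant part of the EHP sequence in the $11$-stem from Toda's tables \cite{toda}: one identifies $\pi_{22}(S^{10})$ together with the Hopf invariant map $H$ and checks that the generator $\nu_{19}$ of the $2$-primary summand $\Z/8\subseteq\pi_{22}(S^{19})$ is not hit, while $2\nu_{19}$ (which must lie in $\operatorname{im}(H)$ by the torsion computation above) is. I expect this final nonvanishing to be the delicate step: the class $[\nu_9,i_9]$ dies under every suspension, so it cannot be detected stably and is a genuinely unstable phenomenon at $S^9$ that must be extracted from the metastable EHP data rather than from the stable stems.
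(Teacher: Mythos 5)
Your reduction $[\nu_9,i_9]=[i_9,i_9]\circ\nu_{17}=\pm P(\nu_{19})$ is the same first step the paper takes, but from there you diverge: the paper never needs an upper bound on the order, because it identifies $[i_9,i_9]$ explicitly in Toda's basis for $\pi_{17}(S^9)$ as $\nubar_9+\epsilon_9+\sigma_9\circ\eta_{16}$ (Toda's (7.1)), kills the last two summands against $\nu_{17}$ using $\eta\nu=0$ and $\epsilon_9\circ\nu_{17}=0$, and then reads off from Toda's Theorem 7.4 that $\nubar_9\circ\nu_{17}$ generates a $\Z/2$ summand of $\pi_{20}(S^9)$. Your upper bound via graded commutativity ($2[i_9,i_9]=0$ since $9$ is odd, and left distributivity over $\nu_{17}$ holds because $\nu_{17}$ is a suspension) is correct and is a genuine simplification of half the statement; your observation that $[i_9,i_9]\neq 0$ by Adams is also correct but, as you note, does not by itself control the composite.

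The gap is that the step carrying all the content --- the nonvanishing --- is not actually executed. Translating $P(\nu_{19})\neq 0$ into $\nu_{19}\notin\operatorname{im}(H\colon\pi_{22}(S^{10})\to\pi_{22}(S^{19}))$ is exactly equivalent by exactness, so no progress is made until you actually determine $\operatorname{im}(H)$, and doing that the way you propose requires knowing $\pi_{22}(S^{10})$ together with the Hopf invariants of its generators --- unstable data that is at least as delicate as what the paper extracts from Toda. If you want to stay with the EHP sequence, the efficient way to close the argument is to use the \emph{other} half of exactness, $\operatorname{im}(P)=\ker(E\colon\pi_{20}(S^9)\to\pi_{21}(S^{10}))$: Toda's Theorem 7.4 states directly that the kernel of suspension on $\pi_{20}(S^9)=\Z/8\{\zeta_9\}\oplus\Z/2\{\nubar_9\circ\nu_{17}\}$ is the $\Z/2$ summand, and since $\nu_{19}$ generates $\pi_{22}(S^{19})$ (2-locally), $\operatorname{im}(P)$ is the cyclic group generated by $P(\nu_{19})$; a cyclic group of order $2$ forces $P(\nu_{19})$ to have order exactly $2$. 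With that substitution your argument is complete and genuinely different from the paper's, trading the explicit decomposition of $[i_9,i_9]$ for one exactness argument plus one line of Toda's tables.
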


\begin{proof}
First note that $[\nu_9,i_9]= [i_9,i_9]\circ \nu_{17}$. By \cite[Theorem 7.1]{toda}, $\pi_{17}S^{9}=\Z/2\{\epsilon_9,\nubar_9,\sigma_9\circ\eta_{16}\}$, where under suspension $\nubar_{10}=\epsilon_{10} + \sigma_{10}\circ\eta_{17}$. By \cite[Theorem 7.4]{toda}, $\pi_{20}S^9 = \Z/8\{\zeta_9\} \oplus \Z/2\{\nubar_9\circ\nu_{17}\}$, where the element $\zeta_9$ is stably $P(\nu)$ and $\nubar_{9}\circ\nu_{17}$ is in the kernel of suspension. According to \cite[Equation 7.1]{toda}, $[i_9,i_9]=\nubar_{9}+\epsilon_{9} + \sigma_9\circ\eta_{16}$ is the nonzero element of the kernel of suspension in $\pi_{17}S^9$. Since $\eta_{16}\circ \nu_{17}= 0 = \epsilon_{9}\circ \nu_{17}$, we have $[\nu_9,i_9] = [i_9,i_9]\circ \nu_{17} = \nubar_{9}\circ\nu_{17}$ is the nontrivial element of the kernel of suspension in $\pi_{20}S^9$.
\end{proof}

\begin{proof}[Proof of \cref{no-three-cell}]
We apply \cref{jamesthm} with $q=9$, $n=13$, $\alpha=\nu_9$. By \cref{whitehead-product}, the Whitehead product $[\nu_9,i_9]$ is a nonzero two-torsion element, but by \cref{nu9-mult-zero}, the image of $\nu_9\colon \pi_{20}S^{12}\to \pi_{20}S^9$ is zero.  A type $(m,\nu_9)$ complex exists if and only if $m[\nu_9,i_9]=0$ which is true when $m$ is even.
\end{proof}

\bibliography{tmf-not-E1-thom-spectrum}
\bibliographystyle{plain}

\end{document}